\definecolor{Gray}{gray}{.25}
\newtheorem{theorem}{Theorem}[section]
\theoremstyle{remark}
\newtheorem{remark}{Remark}[section]
\begin{document}
\vspace*{0.35in}

\begin{flushleft}
{\Large\textbf{A Structure-Preserving Rational Integrator for the Replicator Dynamics on the Probability Simplex}
}
\newline
\\
{\Large Mario Pezzella\textsuperscript{1,2,*}\orcidlink{0000-0002-1869-945X}}
\\
\bigskip
1 National Research Council of Italy, Institute for Applied Mathematics \\ \phantom{1} “Mauro Picone”, Via P. Castellino, 111 - 80131, Naples, Italy
\\
2 Member of the INdAM Research group GNCS, Rome, Italy
\\
* Corresponding Author \hspace{0.277 \textwidth} \faEnvelope[regular] \href{mailto:mario.pezzella@cnr.it}{mario.pezzella@cnr.it}
\end{flushleft}

\medskip

\begin{abstract}
    In this work, we introduce a quadratically convergent and dynamically consistent integrator specifically designed for the replicator dynamics. The proposed scheme combines a two-stage rational approximation with a normalization step to ensure confinement to the probability simplex and unconditional preservation of non-negativity, invariant sets and equilibria. A rigorous convergence analysis is provided to establish the scheme’s second-order accuracy, and an embedded auxiliary method is devised for adaptive time-stepping based on local error estimation. Furthermore, a discrete analogue of the quotient rule, which governs the evolution of component ratios, is shown to hold. Numerical experiments validate the theoretical results, illustrating the method’s ability to reproduce complex dynamics and to outperform well-established solvers in particularly challenging scenarios.
\end{abstract}
{ \noindent \small  \textsc{\textbf{Keywords.}} Rational approximation, \ replicator system, \ structure-preserving scheme,   \ high order, \ dynamical consistency.}\\
{ \noindent \small  \textsc{\textbf{MSC2020.}} \ 65L05, 65L70, 65Z05, 37M15, 41A20}

\section{Introduction}
The replicator equation emerges in a wide spectrum of scientific disciplines, where it models the competitive dynamics among interacting species or strategies shaped by frequency-dependent selection. In its classical continuous-time formulation, the replicator system is given by
\begin{equation}\label{eq:replicator}
  \frac{dx_i}{dt}(t) = x_i(t)\left( f_i(\bm{x}(t)) - \sum_{j=1}^{N} x_j(t) f_j(\bm{x}(t)) \right),\quad i=1,\dots,N,
\end{equation}
where $\bm{x}(t)=[x_1(t),\dots,x_N(t)]^\mathsf{T}\in \mathbb{R}^N$ denotes the state vector and $f_i(\bm{x}(t))$, $i=1,\dots,N$, represents the fitness associated with the $i$-th component. The primary objective of this work is to construct a high-order, positivity-preserving numerical method for system~\eqref{eq:replicator}, which unconditionally retains key structural features of the continuous dynamics constrained to the probability simplex.

Originally introduced by Taylor and Jonker~\cite{Taylor1978} and later named by Schuster and Sigmund~\cite{SCHUSTER}, the replicator framework was formalized in a biological setting by Maynard Smith and Price~\cite{SMITH_1973,MAYNARDSMITH1974209} to describe natural selection independently of genetic mechanisms. It also laid the foundations of evolutionary game theory, where replicator equations describe how populations adjust their strategic behavior in response to payoff differences. In this setting, the equilibria of the system correspond to the Nash equilibria of the underlying game, offering a dynamical perspective on classical notions of game-theoretic stability~\cite{Sandholm2010,Random,Natalini1,Natalini2}.
Since then, it has become a cornerstone of theoretical biology, with applications ranging from adaptation theory to ecological and genetic modeling~\cite{Hofbauer1998,Nowak2006}. In theoretical ecology, the replicator equation can be derived from generalized Lotka–Volterra systems reformulated in one higher dimension~\cite{Hofbauer1998,Hofbauer_2003}, a connection that has motivated studies on polymorphic populations and phenotype distributions~\cite{Varga_2024}. In population genetics, it provides a deterministic approximation of allele frequency dynamics in diploid, randomly mating populations under selection~\cite{Hofbauer_2003,Ewens_2004}. More recently, replicator dynamics have gained renewed relevance in the modeling of pathogen competition and infectious disease transmission, particularly in scenarios involving multiple co-circulating strains~\cite{Gjini_2023,Madec_2020}.

Beyond biological contexts, equation~\eqref{eq:replicator} has found increasing application in computational sciences. It has been employed to model strategic behavior on complex networks~\cite{Li_2025,Networks2}, to identify dominant sets in clustering and pattern recognition~\cite{Clustering,Pavan2007} and to tackle problems in combinatorial optimization~\cite{Bomze_1997}, multi-agent reinforcement learning~\cite{Sato2002} and evolutionary finance~\cite{Weibull1995}.

Due to this diversity of applications, the efficient numerical integration of system~\eqref{eq:replicator} presents practical challenges. In particular, standard numerical schemes often fail to retain the structural properties and invariants of the continuous system, unless the time step is adequately refined. As a result, numerical simulations may lose long-term reliability and interpretability in applied contexts. Such considerations, combined with the lack of structure-preserving schemes specifically devised for the replicator system, motivate the present study and the development of a second-order, dynamically consistent integrator for model~\eqref{eq:replicator}. 

Several contributions in the literature have addressed similar goals in different settings. Particular emphasis has been placed on the numerical preservation of positivity, a fundamental requirement in many dynamical systems arising in real-world applications. In this regard, NonStandard Finite Difference (NSFD) schemes~\cite{Mickens_NSFD} have attracted considerable attention, with numerous extensions proposed for both differential~\cite{Lubuma_Chem,Conte2022,Patidar,Anguelov} and non-local models~\cite{MPV_MBE,MPV_JCD,Pezzella_ESAIM,Cardone_Fract,Fract}. Although such schemes are inherently designed to ensure dynamical consistency, their accuracy is often constrained by a low order of convergence~\cite{Hoang03102023,NSFD_High,Hoang_Second}. In general, the construction of high-order unconditionally positive numerical methods remains a challenging task, typically limited to specific classes of systems (see, e.g.,~\cite{PDS,MPV_Axioms,MPV_mixing,OFFNER202015,CdS}). More broadly, the simultaneous preservation of additional structural features, such as invariant sets, equilibrium configurations and asymptotic dynamics, has inspired the development of geometric and structure-preserving integrators~\cite{Hairer2006, Liu, LEWIS2003141, Sharma, GeCO_Izgin, GeC01, Celledoni2014, MPV_mixing, MPSL}.

The manuscript is organized as follows. Section~\ref{sec:Model} recalls classical results from the literature concerning the replicator model~\eqref{eq:replicator} and outlines the main structural properties of its dynamics, including the evolution on the probability simplex, the existence of internal equilibrium points and the so-called \emph{quotient rule}. The normalized rational integrator then is introduced in Section~\ref{sec:Method}, where we prove that it preserves non-negativity and the linear invariant of the system for arbitrary stepsize sequences, ensuring that the discrete dynamics remain confined to the simplex. A comprehensive error analysis is presented in Section~\ref{Sec:Error} and theoretical results on consistency, convergence and adherence to the \emph{quotient rule} are provided. An adaptive time-stepping strategy based on local error control is there proposed, as well. Section~\ref{sec:Numerical_Experiments} presents numerical experiments, while Section~\ref{sec:Conclusions} concludes the paper with remarks and insights on future developments.

\section{The continuous-time model}\label{sec:Model}
We consider the replicator model \eqref{eq:replicator} compactly reformulated as follows
\begin{equation}\label{eq:replicator_Compact}
  \dfrac{d\bm{x}}{dt}(t) = \bm{x}(t) \odot \left( \bm{f}(\bm{x}(t)) - \left( \bm{f}(\bm{x}(t))^\mathsf{T}  \bm{x}(t)\right)  \, \bm{e} \right)
\end{equation}
where $\bm{x}(t) = [x_1(t),\dots,x_N(t)]^\mathsf{T} \in \mathbb{R}^N$  is the non-negative state vector, $\bm{f}(\bm{x}) = [f_1(\bm{x}),\dots,f_N(\bm{x})]^\mathsf{T} \in \mathbb{R}^N$ is the vector of fitness functions, $\bm{e} = [1,\dots,1]^\mathsf{T}$ is the vector of all ones and $\odot$ denotes the Hadamard (component-wise) product. There is ample literature devoted to the study and analysis of models of the form~\eqref{eq:replicator_Compact}, with foundational contributions in \cite{Sigmund1986,SCHUSTER}. In what follows, we briefly recall the main structural features of the model, which will guide the construction of dynamics-preserving numerical schemes.
\begin{itemize}
    \item The $N-1$ dimensional probability simplex $\Delta^{N-1}\subset \mathbb{R}^N$ (see, for instance, \cite[Section 2.2.4]{boyd2004convex}), defined as the set 
    \begin{equation}\label{eq:simplex_def}
        \Delta^{N-1} = \left\{\bm{\xi}=[\xi_1,\dots,\xi_N]^\mathsf{T}\in\mathbb{R}^N \ :  \ \sum_{i=1}^{N}\xi_i = 1 \text{ and } \xi_i\ge 0, \ \forall \, i=1,\dots,N \right\},
    \end{equation}
    is positively invariant under the flow of the replicator dynamics, that is,\begin{equation}\label{eq:Simplex_Invariant}
        \bm{x}(0) \in \Delta^{N-1} \quad \Longrightarrow \quad \bm{x}(t) \in \Delta^{N-1}, \quad \forall t \geq 0.
    \end{equation}
    Equivalently, if $\bm{x}(0) \in \Delta^{N-1}$, then
    \begin{equation}\label{eq:Simplex_Invariant_COMPONENT} 
        \bm{x}(t) \geq 0 \quad \text{and} \quad \bm{e}^{\mathsf{T}} \bm{x}(t) = 1, \quad \forall t \geq 0,
    \end{equation}
    where here, and throughout this work, vector inequalities are to be intended component-wise. Moreover, every face of the simplex, i.e. each subsimplex of the form
    \begin{equation}\label{eq:Faces_simplices}
        \Delta_{\mathcal{I}}^{N-1} = \left\{\bm{\xi} = [\xi_1,\dots,\xi_N]^\mathsf{T} \in \Delta^{N-1} \ : \ \xi_i = 0 \ \ \forall i \in \mathcal{I} \subset \{1,\dots,N\} \right\},
    \end{equation}
    is positively invariant as well.
    
    \item The invariance of the probability simplex naturally leads to a classification of the equilibria of~\eqref{eq:replicator} according to their location within~$\Delta^{N-1}$, as detailed below.
    Let $\bm{e}^{(k)} = [e_1^{(k)}, \dots, e_N^{(k)}]^\mathsf{T} \in \Delta^{N-1}$, for $k = 1,\dots,N$, denote the $k$-th canonical basis vector of $\mathbb{R}^N$, that is,
    \begin{equation}\label{eq:e_i-Def}
        e_k^{(k)} = 1 \qquad \text{and} \qquad e_l^{(k)} = 0, \quad \text{for all } l \in \{1,\dots,N\} \setminus \{k\}.
    \end{equation}
    Each of these vectors $\bm{e}^{(k)}$ is an equilibrium of the replicator system~\eqref{eq:replicator}, in the sense that
    \begin{equation}\label{eq:equilibria_properties}
        \bm{x}(0) = \bm{e}^{(k)} \qquad \Longrightarrow \qquad \bm{x}(t) = \bm{e}^{(k)}, \quad \forall t \geq 0.
    \end{equation}
    Such equilibria are referred to as corner equilibria, since they lie at the vertices of the simplex.
    In addition, there may exist other equilibria within the interior or on the faces of $\Delta^{N-1}$. These are the so-called internal equilibria, given by points $\bm{x}^* \in \Delta^{N-1}$ satisfying
    \begin{equation}\label{eq:inter_equilibria_properties}
        \bm{e}^\mathsf{T} \bm{x}^* = 1 \qquad \text{and} \qquad f_j(\bm{x}^*) = f^*, \quad \text{for all } j = 1,\dots,N,
    \end{equation}
    for some constant $f^* \in \mathbb{R}$.
    
    \item A distinctive analytical property of the replicator system is the \emph{quotient rule}, which describes the evolution of the ratio between any two components. For all $i,j \in \{1,\dots,N\}$ with $x_j(t) > 0$, it holds that
    \begin{equation}\label{eq:quotient_rule}
        \frac{d}{dt} \left( \frac{x_i(t)}{x_j(t)} \right) = \frac{x_i(t)}{x_j(t)} \left( f_i(\bm{x}(t)) - f_j(\bm{x}(t)) \right).
    \end{equation}
    This shows that the relative growth rate between components $i$ and $j$ depends solely on their fitness difference.
\end{itemize}

The structural features outlined above often reflect fundamental physical or biological principles inherent in replicator systems. For this reason, particular emphasis will be placed, in Section \ref{sec:Method}, on the development of a numerical scheme that unconditionally preserve such properties in the discrete-time setting.

\section{The normalized rational integrator}\label{sec:Method}
Let $\{t_n, \ n=0,\dots,M\},$ with $M \in \mathbb{N}$, denote a (possibly nonuniform) discretization of the integration interval $[0,T]$, satisfying
\begin{equation}\label{eq:Stepsizes_variation}
    t_0 = 0,\quad t_M = T\quad \text{and} \quad t_{n+1} = t_n + h_n, \quad \text{with } h_n \in \mathbb{R}^+, \quad n = 0, \dots, M-1.
\end{equation}
To approximate the solution of the replicator system \eqref{eq:replicator}, we introduce the following numerical scheme
\begin{equation}\label{eq:Rational_Integrator}
    \begin{split}
        &\varphi_i^n=\dfrac{h_n}{2} \left( f_i(\bm{x}^n) - \sum_{j=1}^{N} x_j^n f_j(\bm{x}^n) \right), \qquad  \qquad \qquad 
        \rho_i^n =\dfrac{(\varphi_i^n)^2+6\varphi_i^n+12}{(\varphi_i^n)^2-6\varphi_i^n+12},\\
        &x_i^{n+\frac{1}{2}}= (x_i^n \, \rho_i^n)\left(\displaystyle\sum_{j=1}^N x_j^n \, \rho_j^n\right)^{-1}, \\
        &\varphi_i^{n+\frac{1}{2}}=h_n \left( f_i(\bm{x}^{n+\frac{1}{2}}) - \sum_{j=1}^{N} x_j^{n+\frac{1}{2}} f_j(\bm{x}^{n+\frac{1}{2}}) \right), \quad
        \rho_i^{n+\frac{1}{2}}=\dfrac{(\varphi_i^{n+\frac{1}{2}})^2+6\varphi_i^{n+\frac{1}{2}}+12}{(\varphi_i^{n+\frac{1}{2}})^2-6\varphi_i^{n+\frac{1}{2}}+12},\\
        &x_i^{n+1}= (x_i^{n} \, \rho_i^{n+\frac{1}{2}})\left(\displaystyle\sum_{j=1}^N x_j^{n} \, \rho_j^{n+\frac{1}{2}}\right)^{-1}, \\
    \end{split}
\end{equation}
where $\bm{x}^0=\bm{x}(0)$ is given and $\bm{x}^n = [x_1^n, \dots, x_N^n]^\mathsf{T} \approx \bm{x}(t_n)$ for each $n = 1, \dots, M$.

Throughout this paper we refer, when needed, to the following equivalent compact reformulation of \eqref{eq:Rational_Integrator}
\begin{equation}\label{eq:Compact_Rational_Integrator}
    \begin{split}
        &\bm{x}^{n+\frac{1}{2}} = \dfrac{ \bm{x}^n \odot \bm{R}\left(\dfrac{h_n}{2} \left( \bm{f}(\bm{x}^n) - \left(\bm{f}(\bm{x}^n)^\mathsf{T}\bm{x}^n \right) \bm{e} \right)\right) }{ \bm{e}^{\mathsf{T}} \left( \bm{x}^n \odot \bm{R}\left(\dfrac{h_n}{2} \left( \bm{f}(\bm{x}^n) - \left(\bm{f}(\bm{x}^n)^\mathsf{T}\bm{x}^n \right) \bm{e} \right)\right) \right) }, \\[1.2ex]
        &\bm{x}^{n+1} = \dfrac{ \bm{x}^{n} \odot \bm{R} \left( h_n \left( \bm{f}(\bm{x}^{n+\frac{1}{2}}) - \left( \bm{f}(\bm{x}^{n+\frac{1}{2}})^{\mathsf{T}} \bm{x}^{n+\frac{1}{2}} \right) \bm{e} \right) \right) }{ \bm{e}^{\mathsf{T}}  \left( \bm{x}^{n} \odot \bm{R} \left( h_n \left( \bm{f}(\bm{x}^{n+\frac{1}{2}}) - \left( \bm{f}(\bm{x}^{n+\frac{1}{2}})^{\mathsf{T}} \bm{x}^{n+\frac{1}{2}} \right) \bm{e} \right) \right) \right) },
    \end{split}
\end{equation}
where $\odot$ denotes the component-wise Hadamard product and
\begin{equation*}
    \bm{f}(\bm{x})=[f_1(\bm{x}),\dots,f_N(\bm{x})]^\mathsf{T}\in \mathbb{R}^N \quad \text{and} \quad \bm{R}(\bm{x})=[\rho(x_1),\dots,\rho(x_N)]^\mathsf{T}\in \mathbb{R}^N,
\end{equation*}
with $\rho(z)=(z^2+6z+12)(z^2-6z+12)^{-1}$ for $z\in \mathbb{R}.$ The two-stage formulation~\eqref{eq:Compact_Rational_Integrator} is fully explicit, yielding an efficient and easy-to-implement algorithm. The next sections are devoted to its numerical and dynamical analysis.

\subsection{Structure-preserving properties}\label{subsec:Structural_Preserviation}
The normalized rational scheme \eqref{eq:Rational_Integrator} is specifically designed to preserve, with no limitations on the steplength, the fundamental structural features of the replicator dynamics. The forthcoming results establish the dynamical consistency, as defined in \cite{Dynamic,Dang}, of the proposed discretization. 

\begin{theorem}[\textbf{Simplex-confined discrete dynamics}]\label{thm:Preservazione_Simplesso}
    Let the initial value $\bm{x}^0$ belong to the probability simplex $\Delta^{N-1}$ defined in~\eqref{eq:simplex_def} and let $\{\bm{x}^n\}_{n\in \mathbb{N}_0},$ $\{\bm{x}^{n+\frac{1}{2}}\}_{n\in \mathbb{N}_0}$ be the sequences generated by the numerical method~\eqref{eq:Compact_Rational_Integrator}. Then, for any choices of positive discretization steps $\{h_n\}_{n\in \mathbb{N}_0},$
    \begin{equation*}
        x_i^n,x_i^{n+\frac{1}{2}} \geq 0 \qquad \text{and} \qquad \textstyle\sum_{i=1}^{N} x_i^n =\textstyle\sum_{i=1}^{N} x_i^{n+\frac{1}{2}} = 1, \qquad \forall n\in \mathbb{N}_0.
    \end{equation*}
    Furthermore, if $\bm{x}^0$ lies on a face of the simplex $\Delta^{N-1}_\mathcal{I},$ as defined in \eqref{eq:Faces_simplices}, then $\bm{x}^n,\bm{x}^{n+\frac{1}{2}}\in \Delta^{N-1}_\mathcal{I}$ for all $n\in \mathbb{N}_0.$
\end{theorem}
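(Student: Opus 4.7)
The plan is to proceed by induction on $n \in \mathbb{N}_0$, with the base case $\bm{x}^0 \in \Delta^{N-1}$ given by assumption. The whole argument hinges on one elementary observation: the scalar rational function $\rho(z) = (z^2+6z+12)/(z^2-6z+12)$ is strictly positive for every $z \in \mathbb{R}$, since both polynomials $z^2 \pm 6z + 12$ have discriminant $36-48=-12<0$ and positive leading coefficient, and thus remain strictly positive on the whole real line. Consequently, $\rho$ is everywhere defined, and $\rho_i^n, \rho_i^{n+\frac{1}{2}}>0$ for every index $i$ and every step $n$, independently of the stepsize $h_n$.

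Assuming the inductive hypothesis $\bm{x}^n \in \Delta^{N-1}$, the positivity of $\rho$ together with $x_j^n \geq 0$ yields $x_j^n \rho_j^n \geq 0$; moreover, since $\sum_j x_j^n = 1$, at least one component of $\bm{x}^n$ is strictly positive, so the normalizing denominator $\sum_j x_j^n \rho_j^n$ is strictly positive and the half-step $\bm{x}^{n+\frac{1}{2}}$ is well-defined with non-negative entries. Summing its defining expression componentwise yields $\sum_i x_i^{n+\frac{1}{2}}=1$ by construction, so $\bm{x}^{n+\frac{1}{2}}\in \Delta^{N-1}$. The identical reasoning, applied with $\rho_j^{n+\frac{1}{2}}$ (still strictly positive) and with $x_j^n$ appearing both in the numerator and in the normalizing sum of the second stage, then delivers $\bm{x}^{n+1}\in \Delta^{N-1}$, closing the induction.

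Face invariance follows with essentially no extra work. If $x_i^0=0$ for every $i\in \mathcal{I}$, then $x_i^n\rho_i^n = 0$ at the first half-step, forcing $x_i^{n+\frac{1}{2}}=0$; likewise the second half-step multiplies $x_i^n$ (not $x_i^{n+\frac{1}{2}}$) by the scalar $\rho_i^{n+\frac{1}{2}}$, so $x_i^{n+1}=0$. Propagating through the induction preserves the vanishing pattern on $\mathcal{I}$, hence $\bm{x}^n,\bm{x}^{n+\frac{1}{2}}\in \Delta^{N-1}_{\mathcal{I}}$ for all $n$. There is essentially no obstacle in the argument: the only potentially delicate point is ensuring that the normalizing denominators never vanish, which is precisely why isolating the discriminant-based positivity of $\rho$ at the outset, together with the unit-sum invariant of $\bm{x}^n$, is the decisive ingredient.
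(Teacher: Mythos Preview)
Your proof is correct and follows essentially the same approach as the paper's: establish strict positivity of $\rho$ on $\mathbb{R}$, then induct on $n$ using the multiplicative structure of the scheme to propagate non-negativity, unit sum, and the vanishing pattern on $\mathcal{I}$. The only cosmetic differences are that the paper proves $p_\pm(z)>0$ by completing the square rather than via the discriminant, and that you are slightly more explicit than the paper in checking that the normalizing denominators are strictly positive (using that $\sum_j x_j^n=1$ forces some component to be nonzero).
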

\begin{proof}
    We preliminarily remark that the polynomials \( p_{\pm}(z) = z^2 \pm 6z + 12 \) satisfy
    \begin{equation*}
        p_{\pm}(z) = (z \pm 3)^2 + 3,
    \end{equation*}
    which implies that $p_{\pm}(z) > 0$ for all $z \in \mathbb{R}$. Hence, the rational function $\rho(z)=p_+(z) / p_-(z)$ and the vector-valued mapping $R(\bm{x})=[\rho(x_1),\dots,\rho(x_N)]^\mathsf{T}$ are well defined and strictly positive on the real line and on $\mathbb{R}^N,$ respectively. \\
    We proceed by mathematical induction to prove that the statements $\bm{x}^n,\bm{x}^{n+\frac{1}{2}}\geq 0$ and $\textstyle\sum_{i=1}^{N} x_i^n =\textstyle\sum_{i=1}^{N} x_i^{n+\frac{1}{2}} = 1$ hold true for all $n\in\mathbb{N}_0.$ The base case $n=0$ naturally follows from the assumptions and the positivity of $\bm{R}$. Consider $n\geq0$ and assume that the properties are verified for each $0\leq m \leq n.$ Then, from \eqref{eq:Compact_Rational_Integrator},
    \begin{equation*}
        \bm{R}(\bm{x}^n)\geq 0 \, \implies \, \bm{x}^{n+\frac{1}{2}} \geq 0 \quad \wedge \quad \bm{R}(\bm{x}^{n+\frac{1}{2}})\geq 0 \, \implies \, \bm{x}^{n+1}\geq0,
    \end{equation*}
    that yields the non-negativity result. Furthermore, straightforward manipulations of \eqref{eq:Compact_Rational_Integrator} lead to $\bm{e}^{\mathsf{T}}  \bm{x}^n=\bm{e}^{\mathsf{T}}  \bm{x}^{n+\frac{1}{2}}=1.$ Analogously, if we assume that $\bm{x}^m\in \Delta^{N-1}_\mathcal{I}$ for each $0\leq m \leq n,$ then from the second and the last equations in \eqref{eq:Rational_Integrator} it follows that $x_i^{n+\frac{1}{2}}=x_i^{n+1}=0$ for all $i\in\mathcal{I},$ which completes the proof.
\end{proof}

Theorem \ref{thm:Preservazione_Simplesso} shows that the rational integrator \eqref{eq:Rational_Integrator} preserves the structure of the probability simplex, ensuring the validity of \eqref{eq:Simplex_Invariant} at the discrete-time level, regardless of the chosen stepsizes. Equivalently, in compliance with \eqref{eq:Simplex_Invariant_COMPONENT}, the scheme~\eqref{eq:Rational_Integrator} is unconditionally non-negative and for the numerical solution the linear invariant
\begin{equation}\label{eq:discrete_Linear_Invariant}
    \bm{e}^{\mathsf{T}}  \bm{x}^{n+\frac{1}{2}}=\bm{e}^{\mathsf{T}}  \bm{x}^n=1, \qquad \forall n \in \mathbb{N}_0,
\end{equation}
holds true. In addition, all faces of the simplex are invariant under the discrete dynamics, thereby preserving the geometric structure of the continuous system.

Beyond geometric invariance, another crucial aspect of dynamical consistency lies in the preservation of equilibrium configurations. The following results show that the rational integrator unconditionally reproduces both corner and internal equilibria of the continuous-time system \eqref{eq:replicator}.

\begin{theorem}[\textbf{Corner equilibria preservation}]\label{thm:Corner_equilibria}
     Assume that the initial value satisfies $\bm{x}^0=\bm{e}^{(k)},$ for any $k\in \{1,\dots,N\},$ with $\bm{e}^{(k)}$ defined in \eqref{eq:e_i-Def}. Then, for any choices of positive steplenghts $\{h_n\}_{n\in \mathbb{N}_0},$ the properties
     \begin{equation*}
         x_k^{n}=1 \qquad \text{and} \qquad x_l^{n}=0, \quad \text{for each} \quad l\in\{1,\dots,N\}\setminus\{k\}, \qquad  \forall n\in \mathbb{N}_0,
    \end{equation*}
    hold true. Equivalently, $\bm{x}^n=\bm{x}^0$ for all $n\in\mathbb{N}_0$ and the simplex corner $\bm{e}^{(k)}$ is a fixed point for the discrete equation \eqref{eq:Rational_Integrator}. 
\end{theorem}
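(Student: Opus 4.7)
The cleanest route is to appeal directly to the face-invariance property already established in Theorem~\ref{thm:Preservazione_Simplesso}. Specifically, the plan is to select the index set $\mathcal{I} = \{1,\dots,N\}\setminus\{k\}$ and observe that the corresponding face $\Delta^{N-1}_{\mathcal{I}}$ is a zero-dimensional set: the constraints $\xi_l = 0$ for all $l \neq k$, together with $\sum_{i=1}^{N} \xi_i = 1$, force $\xi_k = 1$, so $\Delta^{N-1}_{\mathcal{I}} = \{\bm{e}^{(k)}\}$. Since $\bm{x}^0 = \bm{e}^{(k)} \in \Delta^{N-1}_{\mathcal{I}}$, Theorem~\ref{thm:Preservazione_Simplesso} yields $\bm{x}^n, \bm{x}^{n+\frac{1}{2}} \in \Delta^{N-1}_{\mathcal{I}} = \{\bm{e}^{(k)}\}$ for every $n \in \mathbb{N}_0$, which is exactly the claim.

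For completeness, I would also sketch the underlying mechanism by a short direct induction, since it makes transparent \emph{why} corners are fixed and requires essentially no new work. The base case is the assumption $\bm{x}^0 = \bm{e}^{(k)}$. For the inductive step, assuming $\bm{x}^n = \bm{e}^{(k)}$, I would inspect the stage formulas in~\eqref{eq:Rational_Integrator}. Every $l \neq k$ gets $x_l^n = 0$ as a multiplicative factor in the numerator defining $x_l^{n+\frac{1}{2}}$, so $x_l^{n+\frac{1}{2}} = 0$ regardless of the value of $\rho_l^n$ (which is well-defined and strictly positive by the argument recalled in the proof of Theorem~\ref{thm:Preservazione_Simplesso}). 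The normalization identity $\sum_i x_i^{n+\frac{1}{2}} = 1$ from~\eqref{eq:discrete_Linear_Invariant} then forces $x_k^{n+\frac{1}{2}} = 1$, so $\bm{x}^{n+\frac{1}{2}} = \bm{e}^{(k)}$. Applying the same reasoning to the full-step update, where $x_l^n = 0$ again multiplies into the numerator for each $l \neq k$, yields $\bm{x}^{n+1} = \bm{e}^{(k)}$.

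No step is genuinely difficult here; the only subtlety worth flagging is that the second stage of~\eqref{eq:Rational_Integrator} evaluates $\rho_i^{n+\frac{1}{2}}$ at $\bm{x}^{n+\frac{1}{2}}$ but multiplies it by $x_i^n$ (not $x_i^{n+\frac{1}{2}}$). This mixing could a priori have spoiled the preservation, but since both $\bm{x}^n$ and $\bm{x}^{n+\frac{1}{2}}$ coincide with $\bm{e}^{(k)}$ at this point, the argument still closes. Either the face-invariance shortcut or the direct induction makes the statement immediate, and I would present the face-based proof as the main line with the explicit computation mentioned as a remark.
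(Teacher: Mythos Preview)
Your proposal is correct. Your main route---reducing the claim to the face-invariance clause of Theorem~\ref{thm:Preservazione_Simplesso} with $\mathcal{I}=\{1,\dots,N\}\setminus\{k\}$ and observing that $\Delta^{N-1}_{\mathcal{I}}=\{\bm{e}^{(k)}\}$---is a genuinely different and tidier argument than what the paper does. The paper instead runs the direct induction from scratch: it verifies $x_l^{n+\frac{1}{2}}=x_l^{n+1}=0$ for $l\neq k$ from the multiplicative zero, but then, rather than invoking the normalization identity as you do, it explicitly computes $\varphi_k^n=0\Rightarrow\rho_k^n=1\Rightarrow\sum_j x_j^n\rho_j^n=1\Rightarrow x_k^{n+\frac{1}{2}}=1$, and repeats this for the second stage. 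Your face-based line buys economy and makes clear that corner preservation is a special case of face invariance; the paper's explicit chain has the modest advantage of being self-contained and of exhibiting the mechanism $\rho(0)=1$ directly, which is informative for the reader but not logically necessary once Theorem~\ref{thm:Preservazione_Simplesso} is available. Your remark on the second stage multiplying $\rho_i^{n+\frac{1}{2}}$ by $x_i^n$ rather than $x_i^{n+\frac{1}{2}}$ is apt and worth keeping.
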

\begin{proof}
    Standard induction arguments yields the result. As a matter of fact, assuming that the statement holds true for an integer $n\geq 0$ leads to 
    \begin{equation*}
        \begin{split}
        & x_l^{n}=0, \quad l\in\{1,\dots,N\}\setminus\{k\}\implies x_l^{n+\frac{1}{2}}=x_l^{n+1}=0, \quad l\in\{1,\dots,N\}\setminus\{k\}, \\
        & x_k^{n}=1 \implies \varphi^n_k=0 \implies \rho_k^{n}=1 \implies\sum_{j=1}^Nx_j^n\rho_j^n=\rho_k^{n}=1 \implies x_k^{n+\frac{1}{2}}=x_k^n=1
        \end{split}
    \end{equation*}
    and therefore to
    \begin{equation*}
        \varphi^{n+\frac{1}{2}}_k=0 \implies \rho_k^{n+\frac{1}{2}}=1 \implies\sum_{j=1}^Nx_j^{n+\frac{1}{2}}\rho_j^{n+\frac{1}{2}}=\rho_k^{n+\frac{1}{2}}=1 \implies x_k^{n+1}=x_k^n=1,
    \end{equation*}
    which completes the proof.
\end{proof}

\begin{theorem}[\textbf{Internal equilibria preservation}]\label{thm:Internal_Equilibria}
    Assume that the initial value $\bm{x}^0=\bm{x}^*$ exhibits the properties in \eqref{eq:inter_equilibria_properties}. Then, independently of the positive discretization steps $\{h_n\}_{n\in \mathbb{N}_0},$ 
    \begin{equation*}
        x_i^{n}=x_i^*, \qquad i=1,\dots,N, \qquad \forall n \in \mathbb{N}_0
    \end{equation*}
    and therefore $\bm{x}^*$ is a fixed point for the numerical method \eqref{eq:Rational_Integrator}.
\end{theorem}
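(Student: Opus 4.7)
The plan is to proceed by induction on $n$, showing that if $\bm{x}^n = \bm{x}^*$ then both the intermediate stage $\bm{x}^{n+\frac{1}{2}}$ and the updated value $\bm{x}^{n+1}$ coincide with $\bm{x}^*$. The base case $n=0$ is immediate from the hypothesis $\bm{x}^0=\bm{x}^*$.

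For the inductive step, the key observation is that the increment $\varphi_i^n$ in \eqref{eq:Rational_Integrator} vanishes at an internal equilibrium. Indeed, assuming $\bm{x}^n=\bm{x}^*$ and exploiting both defining properties in \eqref{eq:inter_equilibria_properties}, I would compute
\begin{equation*}
    \varphi_i^n = \frac{h_n}{2}\left(f_i(\bm{x}^*) - \sum_{j=1}^N x_j^* f_j(\bm{x}^*)\right) = \frac{h_n}{2}\left(f^* - f^*\sum_{j=1}^N x_j^*\right) = 0,
\end{equation*}
where the last equality follows from $\bm{e}^\mathsf{T}\bm{x}^*=1$. This is the main mechanism of the proof: internal equilibria are exactly the configurations at which all fitnesses coincide, so the common shift by $\bm{f}(\bm{x})^\mathsf{T}\bm{x}$ annihilates the component-wise increments.

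Since $\varphi_i^n=0$ for every $i$, the rational factors reduce to $\rho_i^n = \frac{12}{12}=1$, so the normalization step yields
\begin{equation*}
    x_i^{n+\frac{1}{2}} = \frac{x_i^n \cdot 1}{\sum_{j=1}^N x_j^n \cdot 1} = \frac{x_i^*}{\bm{e}^\mathsf{T}\bm{x}^*} = x_i^*.
\end{equation*}
Repeating the same argument at the second stage, with $\bm{x}^{n+\frac{1}{2}}=\bm{x}^*$ in place of $\bm{x}^n$, gives $\varphi_i^{n+\frac{1}{2}}=0$, $\rho_i^{n+\frac{1}{2}}=1$ and therefore $x_i^{n+1}=x_i^*$, closing the induction.

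There is no substantial obstacle here: the result rests entirely on the algebraic identity $\varphi_i^n=0$ at $\bm{x}^*$ and on the normalization structure of~\eqref{eq:Rational_Integrator}. The argument is in fact structurally parallel to Theorem~\ref{thm:Corner_equilibria}, the only difference being that vanishing of the increments is now induced by the equal-fitness condition rather than by concentration at a single vertex. A brief closing remark could note that the proof does not rely on the specific coefficients of the $(2,2)$-Padé-type rational function $\rho$, but only on the normalization $\rho(0)=1$ combined with positivity of $\rho$, so the conclusion is robust with respect to the particular choice of approximant.
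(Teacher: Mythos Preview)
Your proof is correct and follows essentially the same inductive argument as the paper: both show that $\varphi_i^n=0$ at $\bm{x}^*$ (equivalently $\bm{R}(0)=\bm{e}$), so the normalization returns $\bm{x}^{n+\frac{1}{2}}=\bm{x}^*$ and then $\bm{x}^{n+1}=\bm{x}^*$. The only cosmetic difference is that the paper works with the compact vector formulation~\eqref{eq:Compact_Rational_Integrator} while you use the component-wise form~\eqref{eq:Rational_Integrator}; your closing remark on the robustness with respect to the choice of $\rho$ is a nice addition not present in the original.
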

\begin{proof}
    We proceed by mathematical induction. The base case, i.e. $n=0,$ follows from the the assumptions. Consider now an integer $n\geq 0$ and assume that $\bm{x}^n=\bm{x}^*.$ It then follows that $\bm{f}(\bm{x}^n)=f^*\bm{e}$ and, from \eqref{eq:inter_equilibria_properties} and \eqref{eq:Compact_Rational_Integrator},  
    \begin{equation*}
        \begin{split}
            \bm{f}(\bm{x}^n)^\mathsf{T}\bm{x}^n=f^* &\implies \bm{R}\left(\dfrac{h_n}{2} \left( \bm{f}(\bm{x}^n) - \left(\bm{f}(\bm{x}^n)^\mathsf{T}\bm{x}^n \right) \bm{e} \right)\right)=\bm{R}(0)=\bm{e} \\
            &\implies \bm{x}^{n+\frac{1}{2}}=\bm{x}^n=\bm{x}^*.
        \end{split}
    \end{equation*}
    Taking advantage of the last equality, similar arguments yield the relation $\bm{R} \left( h_n \left( \bm{f}(\bm{x}^{n+\frac{1}{2}}) - \left( \bm{f}(\bm{x}^{n+\frac{1}{2}})^{\mathsf{T}} \bm{x}^{n+\frac{1}{2}} \right) \bm{e} \right) \right)=\bm{e},$ which then results in $\bm{x}^{n+1}=\bm{x}^n=\bm{x}^*.$
\end{proof}

Theorems~\ref{thm:Corner_equilibria} and~\ref{thm:Internal_Equilibria} ensure that the equilibria of the continuous-time system~\eqref{eq:replicator} are preserved by the numerical scheme~\eqref{eq:Rational_Integrator}, regardless of the discretization stepsizes. Combined with Theorem~\ref{thm:Preservazione_Simplesso}, these outcomes confirm the structure-preserving nature of the proposed discretization and thus render it well suited for the approximation of replicator dynamics on the probability simplex.

\section{Error Analysis and Convergence}\label{Sec:Error}
Here, we perform an analysis of the approximation error yielded by the two-stage discretization \eqref{eq:Rational_Integrator} and prove the second-order convergence of the corresponding numerical solution. As a first step, we address the numerical consistency through the local truncation error, here denoted by
\begin{adjustwidth}{-0.4in}{0in}
\begin{equation}\label{eq:Local_Error}
        \begin{split}
        &\delta_i(h_n;t_{n+1}) \!= \!x_i(t_{n+1})-\dfrac{x_i(t_n) \ \rho \! \left( \!  h_n \! \left( \! f_i(\bm{\eta}(\bm{x}(t_n))) - \displaystyle \sum_{j=1}^{N} \eta_j(\bm{x}(t_n)) f_j(\bm{\eta}(\bm{x}(t_n))) \right) \! \right)}{\displaystyle \sum_{j=1}^{N} x_j(t_n)  \rho \! \left( \! h_n \!\! \left( \! f_j(\bm{\eta}(\bm{x}(t_n))) \! - \!\! \displaystyle \sum_{k=1}^{N} \eta_k(\bm{x}(t_n)) f_k(\bm{\eta}(\bm{x}(t_n))) \! \right) \!\! \right) },
    \end{split}
\end{equation}
\end{adjustwidth}
$i=1,\dots,N,$where the components of $\eta(\bm{x}(t_n))=[\eta_1(\bm{x}(t_n)),\dots,\eta_N(\bm{x}(t_n))]^{\mathsf{T}}$ are defined as
\begin{equation}\label{eq:eta}
        \eta_i(\bm{x}(t_n))=\dfrac{x_i(t_{n}) \ \rho \! \left( \! \dfrac{h_n}{2} \! \left( \! f_i(\bm{x}(t_{n})) - \displaystyle \sum_{j=1}^{N} x_j(t_{n}) f_j(\bm{x}(t_{n})) \right) \! \right)}{\displaystyle \sum_{j=1}^{N} x_j(t_{n}) \ \rho \! \left( \! \dfrac{h_n}{2} \! \left( \! f_j(\bm{x}(t_{n})) - \displaystyle\sum_{k=1}^{N} x_k(t_{n}) f_k(\bm{x}(t_{n})) \right) \! \right) }. 
\end{equation}

\begin{theorem}\label{thm:Consistency}
    Assume that the given functions describing problem \eqref{eq:replicator} are twice continuously differentiable on the probability simplex $\Delta^{N-1}$ defined in \eqref{eq:simplex_def}. Then, the normalized rational integrator \eqref{eq:Rational_Integrator} is consistent with \eqref{eq:Compact_Rational_Integrator}, of order $2.$
\end{theorem}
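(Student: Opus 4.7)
The plan is to establish $\delta_i(h_n;t_{n+1}) = O(h_n^3)$ uniformly in $i$, which encodes second-order consistency. The analytical engine will be the observation that $\rho(z) = (z^2+6z+12)/(z^2-6z+12)$ is exactly the diagonal $(2,2)$ Padé approximant of $e^z$, so that
\begin{equation*}
    \rho(z) \;=\; e^z + O(z^5),\qquad z\to 0.
\end{equation*}
Read through this identification, the scheme \eqref{eq:Rational_Integrator} becomes a midpoint-type Runge--Kutta discretisation in which each exponential has been replaced by a rational approximation of order higher than needed.

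The first preparatory step is to rewrite the exact flow in a form that mirrors \eqref{eq:Local_Error}. A direct differentiation shows that the solution of \eqref{eq:replicator_Compact} on $[t_n,t_{n+1}]$ admits the closed form
\begin{equation*}
    x_i(t_{n+1}) \;=\; \frac{x_i(t_n)\,\exp(B_i)}{\sum_{j=1}^{N} x_j(t_n)\,\exp(B_j)},\qquad B_j := \int_{t_n}^{t_{n+1}}\!\Bigl[f_j(\bm{x}(s)) - \textstyle\sum_{k=1}^{N} x_k(s)\,f_k(\bm{x}(s))\Bigr]ds,
\end{equation*}
after cancelling the common factor $\exp\!\bigl(\int_{t_n}^{t_{n+1}}\sum_k x_k f_k\,ds\bigr)$ from numerator and denominator.

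Next, I would analyse the first-stage quantity $\bm{\eta}(\bm{x}(t_n))$ defined in \eqref{eq:eta}. Substituting the Taylor expansion $\rho(z)=1+z+\tfrac{z^2}{2}+O(z^3)$ into \eqref{eq:eta}, the linear-in-$h_n$ contribution to the denominator vanishes because $\sum_k x_k(t_n)\bigl[f_k(\bm{x}(t_n)) - \sum_j x_j(t_n) f_j(\bm{x}(t_n))\bigr] = 0$, so the ODE $\dot{x}_j = x_j\bigl(f_j - \sum_k x_k f_k\bigr)$ immediately gives $\eta_j(\bm{x}(t_n)) = x_j(t_{n+1/2}) + O(h_n^2)$. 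Combining this with Lipschitz continuity of the $f_j$'s on $\Delta^{N-1}$ (guaranteed by the $C^2$ assumption) and with the $O(h_n^3)$ error of the midpoint quadrature, the second-stage argument $A_j$ of $\rho$ appearing in \eqref{eq:Local_Error} matches the continuous exponent to the desired order, namely $A_j = B_j + O(h_n^3)$.

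The final step combines the Padé estimate with this exponent matching. Since $A_j,B_j = O(h_n)$, the local Lipschitz bound $|e^{A_j}-e^{B_j}|\le C\,|A_j-B_j|$ together with $\rho(A_j) - e^{A_j} = O(A_j^5)$ yields $\rho(A_j) = e^{B_j} + O(h_n^3)$. Plugging this into \eqref{eq:Local_Error} and using that $\sum_j x_j(t_n)\,e^{B_j}\to 1$ as $h_n\to 0$ keeps the denominator bounded away from zero, a standard difference-of-quotients manipulation delivers $\delta_i(h_n;t_{n+1}) = O(h_n^3)$. The step I expect to demand the most care is precisely this last one: propagating the $O(h_n^3)$ errors on numerator and denominator through the quotient uniformly in $i$ while keeping constants independent of $n$. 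A more brute-force alternative would be to expand both stages of \eqref{eq:Rational_Integrator} directly around $\bm{x}(t_n)$ up to order $h_n^2$ and match term by term with the Taylor series of $\bm{x}(t_{n+1})$, but this route conceals the structural Padé-based interpretation that makes the argument transparent.
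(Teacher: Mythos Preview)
Your argument is correct and complete: the exact exponential representation of the replicator flow, the identification $\bm{\eta}(\bm{x}(t_n))=\bm{x}(t_{n+1/2})+O(h_n^2)$, the midpoint quadrature estimate $A_j=B_j+O(h_n^3)$, and the Pad\'e bound $\rho(A_j)=e^{A_j}+O(h_n^5)$ combine exactly as you describe, and the quotient step is harmless because the denominator equals $1+O(h_n^3)$.

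The paper, however, does not take this structural route; it follows precisely the ``brute-force alternative'' you mention at the end. It Taylor-expands $\rho(z)=1+z+\tfrac{z^2}{2}+O(z^3)$ directly, obtains an explicit expansion of $\bm{\eta}(\bm{x}(t_n))$ to second order in $h_n$, feeds this into a second Taylor expansion of $\rho(h_n F_i(\bm{\eta}))$, and then matches the resulting expression term by term with the Taylor series of $x_i(t_{n+1})$ using $\dot x_i=x_iF_i$ and $\ddot x_i=x_i(F_i^2+\nabla F_i^{\mathsf T}\dot{\bm x})$. Your approach buys a transparent explanation of \emph{why} second order holds (midpoint stage plus a rational surrogate for $e^z$ that is accurate far beyond what is needed), and it avoids computing the $h_n^2$ coefficients explicitly. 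The paper's approach is more elementary in that it never invokes the exact flow formula or the Pad\'e identity, relying only on polynomial Taylor expansions; it also produces as a by-product the explicit relation $\sum_j x_j(t_n)\rho(h_nF_j(\bm{\eta}))=1+O(h_n^3)$, which is reused later in the manuscript.
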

\begin{proof}
    For each \( i = 1, \dots, N \), define the scalar functions
    \begin{equation}\label{eq:F_defn}
        F_i(\bm{\xi}) = f_i(\bm{\xi}) - \textstyle\sum_{j=1}^N x_j f_j(\bm{\xi}), \qquad \qquad \bm{\xi}\in\Delta^{N-1}.
    \end{equation}
    Since $\rho(z) = (z^2 + 6z + 12)(z^2 - 6z + 12)^{-1}$ for $z \in \mathbb{R}$, a straightforward Taylor expansion yields
    \begin{equation*}
        \rho\left( \frac{h_n}{2} F_i(\bm{x}(t_n)) \right) = 1 + \frac{h_n}{2} F_i(\bm{x}(t_n)) + \frac{h_n^2}{8} F_i^2(\bm{x}(t_n)) + \mathcal{O}(h_n^3),
    \end{equation*}
    for each $i=1,\dots,N.$ Moreover, from the identities \( \sum_{j=1}^N x_j(t_n) = 1 \) and \( \sum_{j=1}^N x_j(t_n) F_j(\bm{x}(t_n)) = 0 \), we deduce
        \begin{equation*}
        \left( \sum_{j=1}^N x_j(t_n) \, \rho\left( \frac{h_n}{2} F_j(\bm{x}(t_n)) \right) \right)^{-1} = 1 - \frac{h_n^2}{8} \sum_{j=1}^N F_j^2(\bm{x}(t_n)) \, x_j(t_n) + \mathcal{O}(h_n^3).
        \end{equation*}
    Hence, recalling the definition in \eqref{eq:eta}, it follows that
    \begin{equation}\label{eq:temp_1}
        \begin{split}
            \eta_i(\bm{x}(t_n))=&x_i(t_{n}) \left(1+\frac{h_n}{2}F_i(\bm{x}(t_n))+\frac{h_n^2}{8}\left(F_i^2(\bm{x}(t_{n}))-\sum_{j=1}^NF_j^2(\bm{x}(t_{n}))x_j(t_n) \right)\right)\\
            &+\mathcal{O}(h_n^3), \qquad \qquad \qquad \qquad \qquad \qquad\qquad \qquad \qquad \qquad i=1,\dots,N.
        \end{split}
    \end{equation}
    In particular, one obtains
        \begin{equation*}
        \bm{\eta}(\bm{x}(t_n))-\bm{x}(t_n) = \frac{h_n}{2} \, \dfrac{d \bm{x}}{d t}(t_n) + \mathcal{O}(h_n^2),
        \end{equation*}
    which in turn implies
    \begin{equation*}
        \begin{split}
            \rho \left( h_n F_i(\bm{\eta}(\bm{x}(t_n))) \right) &= 1 + h_n F_i(\bm{x}(t_n)) + \frac{h_n^2}{2} \left( \nabla F_i^\mathsf{T}(\bm{x}(t_n)) \cdot \dfrac{d \bm{x}}{d t}(t_n) + F_i^2(\bm{x}(t_n)) \right) \\
            & \quad + \mathcal{O}(h_n^3),
        \end{split}
    \end{equation*}
    and, recalling $\frac{dx_i}{dt}(t_n)=x_i(t_n)F_i(\bm{x}(t_n)),$
    \begin{equation}\label{eq_tmp_1}
        x_i(t_n) \, \rho\left( h_n F_i(\bm{x}(t_n)) \right) = x_i(t_{n+1}) + \mathcal{O}(h_n^3),
    \end{equation}
    for all \( i = 1, \dots, N \). Therefore, we conclude
    \begin{equation}\label{eq:Normal_ordine_3}
        \left( \sum_{j=1}^N x_j(t_n) \, \rho\left( h_n F_j(\bm{\eta}(\bm{x}(t_n))) \right) \right)^{-1} = 1 + \mathcal{O}(h_n^3),
    \end{equation}
    so that \( \delta_i(h_n; t_{n+1}) = \mathcal{O}(h_n^3) \) for all \( i = 1, \dots, N \), which yields the result.
\end{proof}

The straightforward extension of well-established results assures the quadratic convergence of the scheme \eqref{eq:Rational_Integrator}.

\begin{theorem}\label{thm:Convergence}
    Let $\bm{x}(t)$ be the continuous-time solution to \eqref{eq:replicator} on the interval $[0,T]$ and let $\{\bm{x}^n\}_{0\leq n \leq M}$ be its approximation computed by the normalized rational integrator \eqref{eq:Compact_Rational_Integrator}, with stepsizes $\{h_n\}_{0\leq n < M}$ satisfying \eqref{eq:Stepsizes_variation}. Suppose that the known functions defining \eqref{eq:Compact_Rational_Integrator} are twice continuously differentiable on the probability simplex $\Delta^{N-1}.$ Then, there exists a constant $C>0$ such that
    \begin{equation*}
        \max_{n=0,\dots,M}\|\bm{x}(t_n)-\bm{x}^n\|\leq C h^2,  \qquad \qquad h=\max_{i=0,\dots,M-1} h_i,
    \end{equation*}
    and hence the method \eqref{eq:Compact_Rational_Integrator} is second-order convergent.
\end{theorem}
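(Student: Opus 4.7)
The plan is to combine the consistency estimate $\delta_i(h_n;t_{n+1}) = \mathcal{O}(h_n^3)$ from Theorem~\ref{thm:Consistency} with a discrete stability bound for the one-step update and then to invoke a discrete Gronwall argument. Concretely, I would denote by $\bm{\Phi}_h : \Delta^{N-1} \to \Delta^{N-1}$ the two-stage map defined by \eqref{eq:Compact_Rational_Integrator}, so that $\bm{x}^{n+1} = \bm{\Phi}_{h_n}(\bm{x}^n)$ and, by Theorem~\ref{thm:Preservazione_Simplesso}, the iterates remain in the compact set $\Delta^{N-1}$. Writing the global error $\bm{e}^n = \bm{x}(t_n) - \bm{x}^n$ in the telescoped form $\bm{e}^{n+1} = \bigl(\bm{\Phi}_{h_n}(\bm{x}(t_n)) - \bm{\Phi}_{h_n}(\bm{x}^n)\bigr) + \bigl(\bm{x}(t_{n+1}) - \bm{\Phi}_{h_n}(\bm{x}(t_n))\bigr)$, I would identify the second summand with the local truncation error \eqref{eq:Local_Error}, already bounded by $Ch_n^3$.

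The key step, and the main technical obstacle, is establishing a uniform Lipschitz bound for the one-step flow: there exist constants $h_0, L > 0$ such that $\|\bm{\Phi}_h(\bm{y}) - \bm{\Phi}_h(\bm{z})\| \le (1+Lh)\,\|\bm{y}-\bm{z}\|$ for every $\bm{y},\bm{z}\in\Delta^{N-1}$ and every $h\in (0,h_0]$. Three ingredients enter here. First, $\rho$ is $C^\infty$ on $\mathbb{R}$ with derivatives uniformly bounded on any compact set, and by the $C^2$ hypothesis on $\bm{f}$ together with the boundedness of $\Delta^{N-1}$, the arguments of $\rho$ stay in a fixed compact subset of $\mathbb{R}$ for $h\le h_0$. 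Second, since $\rho(0)=1$ and $\bm{e}^{\mathsf{T}}\bm{x}=1$ on the simplex, both normalization denominators $\bm{e}^{\mathsf{T}}\!\bigl(\bm{x}\odot\bm{R}(\cdots)\bigr)$ equal $1$ at $h=0$; continuity and compactness then yield a uniform lower bound, say $\ge 1/2$, for $h\le h_0$, so that the divisions in \eqref{eq:Compact_Rational_Integrator} are safely Lipschitz. Third, the Hadamard product, the intermediate stage $\bm{x}\mapsto\bm{x}^{n+1/2}$, and the final normalization are compositions of smooth maps on bounded sets; writing $\bm{\Phi}_h(\bm x)=\bm x + h\,\bm\Psi(\bm x, h)$ with $\bm\Psi$ $C^1$ in $\bm x$ uniformly in $h\in[0,h_0]$, the Lipschitz constant of $\bm{\Phi}_h$ is of the form $1+Lh$.

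Once this stability bound is in hand, the recursion $\|\bm{e}^{n+1}\| \le (1+Lh_n)\,\|\bm{e}^n\| + Ch_n^3$ with $\bm{e}^0=\bm{0}$ and a standard discrete Gronwall inequality yields $\|\bm{e}^n\| \le \tfrac{C h^2}{L}\bigl(e^{L t_n}-1\bigr) \le C'\, h^2$ for all $0\le n\le M$, since $h_n^3 \le h^2 \cdot h_n$ and $t_n\le T$. This delivers the claimed quadratic convergence. I expect the Lipschitz estimate on $\bm{\Phi}_h$ to be the only delicate ingredient; the rest mirrors the Lax-type convergence template for one-step methods restricted to a compact positively invariant set, which is precisely the reason the authors describe the argument as a straightforward extension of well-established results.
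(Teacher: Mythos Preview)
Your proposal is correct and follows essentially the same route as the paper: the paper simply observes that the increment functions in \eqref{eq:Compact_Rational_Integrator} are $C^2$ (hence Lipschitz on the compact simplex), recalls the order-$2$ consistency from Theorem~\ref{thm:Consistency}, and then invokes the standard one-step convergence theorem \cite[Theorem~3.6]{Hairer}. Your telescoping, $(1+Lh)$-Lipschitz bound, and discrete Gronwall argument are precisely the content of that cited result, so you are unpacking what the paper leaves to a black-box reference.
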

\begin{proof}
    Since $\bm{R}\in C^\infty(\Delta^{N-1}),$ from the assumptions, the increment functions at the right-hand sides of \eqref{eq:Compact_Rational_Integrator} are twice continuously differentiable. Moreover, by Theorem~\ref{thm:Consistency}, the method is consistent of order 2. Therefore, the result follows from \cite[Theorem 3.6.]{Hairer}.
\end{proof}
The numerical method \eqref{eq:Rational_Integrator} is constructed as a composition of a two-stage rational approximation with a normalization step. The latter is introduced to ensure that the discrete-time dynamics remains confined to the probability simplex for any stepsize (cf. Theorem~\ref{thm:Preservazione_Simplesso}). On the other hand, equation~\eqref{eq:Normal_ordine_3} reveals that the normalization procedure corresponds to a third-order perturbation of the identity operator, which does not alter the second-order accuracy of the underlying rational approximation established by \eqref{eq_tmp_1}. As a result, the structure-preserving scheme \eqref{eq:Rational_Integrator} exhibits quadratic convergence.

\begin{remark}
    Positivity-preserving methods of second order are rare, especially in the context of nonlinear systems. The NSFD schemes recently introduced in~\cite{Gupta,Kojouharov} attain second-order accuracy and preserve local asymptotic stability of equilibria for scalar autonomous ODEs, yet they do not guarantee positivity. A quadratically convergent positivity-preserving method is proposed in~\cite{Hoang_Second}, although still restricted to scalar problems. In contrast, our approach achieves second-order accuracy while ensuring positivity and dynamical consistency for the multidimensional replicator dynamics.
\end{remark}

\subsection{The discrete quotient rule}
The continuous-time \emph{quotient rule} \eqref{eq:quotient_rule}, which governs the evolution of the relative growth $x_i(t)/x_j(t)$ through the fitness difference $\hat{f}_{ij}(\bm{x}) := f_i(\bm{x}) - f_j(\bm{x})$, can be reformulated in integral form as follows
\begin{equation}\label{eq:quotient_rule_INTEGRAL}
    \frac{x_i(t_{n+1})}{x_j(t_{n+1})} = \frac{x_i(t_n)}{x_j(t_n)} \exp \left\{ \int_{t_n}^{t_{n+1}} \hat{f}_{ij}(\bm{x}(\tau)) \, d\tau \right\}.
\end{equation}
This identity suggests that the discrete-time ratio $x_i^{n+1}/x_j^{n+1}$ should replicate the exponential structure on the right-hand side of~\eqref{eq:quotient_rule_INTEGRAL}. The result below shows that the rational integrator~\eqref{eq:Rational_Integrator} achieves this up to second-order accuracy in the stepsize.
\begin{theorem}[\textbf{Discrete quotient rule}]\label{thm:Discrete_Quotient_Rule}
Assume that the functions defining the replicator system~\eqref{eq:replicator} are twice continuously differentiable on the probability simplex~$\Delta^{N-1}$, defined in~\eqref{eq:simplex_def}. Let $\{\bm{x}^n\}$ be the sequence generated by the numerical scheme~\eqref{eq:Rational_Integrator} with stepsizes $\{ h_n \}$. Then, for any $i,j \in \{1,\dots,N\}$ with $x_j^0 > 0$, the following relation holds 
\begin{equation}\label{eq:quotient_rule_Discrete}
    \frac{x_i^{n+1}}{x_j^{n+1}} = \frac{x_i^n}{x_j^n} \left( \, \exp\left\{ h_n \, \hat{f}_{ij}(\bm{x}^n) \right\} + \mathcal{O}(h_n^2) \, \right), \qquad \text{ for all } n\geq0,
\end{equation}
where $\hat{f}_{ij}(\bm{x}) := f_i(\bm{x}) - f_j(\bm{x})$ denotes the fitness difference.
\end{theorem}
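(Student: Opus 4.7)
The plan rests on a key structural observation about the auxiliary function $\rho$: it coincides with the diagonal $(2,2)$ Padé approximant of the exponential. Indeed, after dividing numerator and denominator by $12$, one recognizes $\rho(z)=(1+z/2+z^2/12)/(1-z/2+z^2/12)$, and a standard Taylor comparison gives $\rho(z)=e^{z}+\mathcal{O}(z^5)$ as $z\to 0$. This identification converts a ratio of rational updates into an exponential up to a high-order remainder, which is the engine of the entire argument.

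The first step is to form the ratio $x_i^{n+1}/x_j^{n+1}$ directly from the last equation of \eqref{eq:Rational_Integrator}: the normalization denominators are identical for the two indices and cancel, yielding
\begin{equation*}
\frac{x_i^{n+1}}{x_j^{n+1}}=\frac{x_i^{n}}{x_j^{n}}\cdot\frac{\rho(\varphi_i^{n+1/2})}{\rho(\varphi_j^{n+1/2})}.
\end{equation*}
The ratio is well defined: $x_j^0>0$ propagates to $x_j^n>0$ for all $n$ because $\rho$ is strictly positive (as proved inside Theorem~\ref{thm:Preservazione_Simplesso}) and $\bm{x}^n\in\Delta^{N-1}$. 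Moreover, the mean-field term $\sum_k x_k^{n+1/2}f_k(\bm{x}^{n+1/2})$ enters both $\varphi_i^{n+1/2}$ and $\varphi_j^{n+1/2}$ identically, so it cancels in the difference and
\begin{equation*}
\varphi_i^{n+1/2}-\varphi_j^{n+1/2}=h_n\,\hat f_{ij}(\bm{x}^{n+1/2}).
\end{equation*}

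The second step invokes the Padé identity. Since $\rho(\varphi_\bullet^{n+1/2})=1+\mathcal{O}(h_n)$ is bounded away from zero for $h_n$ small, dividing the two expansions $\rho(\varphi_\bullet^{n+1/2})=e^{\varphi_\bullet^{n+1/2}}+\mathcal{O}(h_n^5)$ produces
\begin{equation*}
\frac{\rho(\varphi_i^{n+1/2})}{\rho(\varphi_j^{n+1/2})}=\exp\!\bigl\{h_n\,\hat f_{ij}(\bm{x}^{n+1/2})\bigr\}+\mathcal{O}(h_n^5).
\end{equation*}

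The final step is to replace the intermediate state $\bm{x}^{n+1/2}$ by $\bm{x}^n$ inside the exponent. The first stage of the scheme, examined exactly as in the derivation leading to \eqref{eq:temp_1} in the proof of Theorem~\ref{thm:Consistency}, gives $\bm{x}^{n+1/2}=\bm{x}^n+\mathcal{O}(h_n)$, so by the $C^2$ regularity of $\hat f_{ij}$ on $\Delta^{N-1}$ one obtains $h_n\hat f_{ij}(\bm{x}^{n+1/2})=h_n\hat f_{ij}(\bm{x}^n)+\mathcal{O}(h_n^2)$. Using $\exp\{\mathcal{O}(h_n^2)\}=1+\mathcal{O}(h_n^2)$ together with the uniform boundedness of $\exp\{h_n\hat f_{ij}(\bm{x}^n)\}$ converts the previous display into $\exp\{h_n\hat f_{ij}(\bm{x}^n)\}+\mathcal{O}(h_n^2)$, which is exactly \eqref{eq:quotient_rule_Discrete}. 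The only genuine obstacle is recognizing the Padé structure of $\rho$; once that is in hand, the proof is bookkeeping of remainders, and it becomes transparent that the $\mathcal{O}(h_n^2)$ barrier is imposed by evaluation at the intermediate stage, not by the rational approximation itself.
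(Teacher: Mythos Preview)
Your proof is correct and follows essentially the same route as the paper: identify $\rho$ as the $[2/2]$ Pad\'e approximant of the exponential, cancel the normalization in the ratio $x_i^{n+1}/x_j^{n+1}$, exploit the cancellation of the mean-field term in $\varphi_i^{n+1/2}-\varphi_j^{n+1/2}$, and finish with $\bm{x}^{n+1/2}=\bm{x}^n+\mathcal{O}(h_n)$. The only cosmetic difference is that the paper additionally cites the convergence theorem to justify transferring the expansion~\eqref{eq:temp_1} to the discrete iterates, whereas you obtain $\bm{x}^{n+1/2}-\bm{x}^n=\mathcal{O}(h_n)$ directly from $\bm{x}^n\in\Delta^{N-1}$ and boundedness of the fitness functions there; your route is self-contained and slightly cleaner.
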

\begin{proof}
   Theorem~\ref{thm:Preservazione_Simplesso} guarantees that $\bm{x}^{n+\frac{1}{2}} \in \Delta^{N-1}$. As a consequence,  $\varphi_i^{n+\frac{1}{2}} = \mathcal{O}(h_n)$. Recalling that the rational function $\rho(z)$ coincides with the $[2/2]$ Padé approximant of the exponential function (see~\cite[Sections~1.2 and~1.4]{Pade_Book}), we have
    \begin{equation*}
        \rho(z) = \frac{1 + \frac{1}{2} z + \frac{1}{12} z^2}{1 - \frac{1}{2} z + \frac{1}{12} z^2} = \exp(z) + \mathcal{O}(z^5),
    \end{equation*}
    and thus, for each $i = 1,\dots,N$,
    \begin{equation*}
        \rho_i^{n+\frac{1}{2}} = \rho\left( \varphi_i^{n+\frac{1}{2}} \right) = \exp\left( \varphi_i^{n+\frac{1}{2}} \right) + \mathcal{O}(h_n^5).
    \end{equation*}
    Now, assuming $x_j^0 > 0$ (which ensures $x_j^n > 0$ for all $n$), we obtain from \eqref{eq:Rational_Integrator}
    \begin{equation*}
        \begin{split}
            \frac{x_i^{n+1}}{x_j^{n+1}} &= \frac{x_i^n}{x_j^n} \left( \, \exp\left\{ \varphi_i^{n+\frac{1}{2}}-\varphi_j^{n+\frac{1}{2}} \right\} + \mathcal{O}(h_n^5) \, \right) \\
            &= \frac{x_i^n}{x_j^n} \left( \, \exp\left\{ h_n \hat{f}_{ij}(\bm{x}^n) + h_n (\nabla \hat{f}_{ij}(\bm{x}^n) )^\mathsf{T}(\bm{x}^{n+\frac{1}{2}}-\bm{x}^n)\right\} + \mathcal{O}(h_n^2) \, \right),
        \end{split}
    \end{equation*}
    for all $i=1,\dots,N.$ Finally, from Theorem \ref{thm:Convergence}, $\bm{x}^n=\bm{x}(t_n)+\mathcal{O}(h_n^2)$ and from \eqref{eq:temp_1} 
    \begin{equation*}
        \begin{split}
            x_i^{n+\frac{1}{2}}-x_i^n=x_i^n \left( \rho_i^n\left(\displaystyle\sum_{j=1}^N x_j^n \, \rho_j^n\right)^{-1}-1\right)=\mathcal{O}(h_n), \qquad \qquad i=1,\dots, N,
        \end{split}
    \end{equation*}
    which implies $\bm{x}^{n+\frac{1}{2}} - \bm{x}^n = \mathcal{O}(h_n)$, yielding \eqref{eq:quotient_rule_Discrete}.
\end{proof}
We remark that the relation \eqref{eq:quotient_rule_Discrete} constitutes a discrete analogue of the continuous identity \eqref{eq:quotient_rule_INTEGRAL}. As a matter of fact, under the aforementioned smoothness assumptions, the integral in \eqref{eq:quotient_rule_INTEGRAL} admits the approximation
\begin{equation*}
    \int_{t_n}^{t_{n+1}} \hat{f}_{ij}(\bm{x}(\tau)) \, d\tau = h_n \, \hat{f}_{ij}(\bm{x}(t_n)) + \mathcal{O}(h_n^2)= h_n \, \hat{f}_{ij}(\bm{x}^n) + \mathcal{O}(h_n^2),
\end{equation*}
which corresponds to the left-point rectangular quadrature rule.

\subsection{Adaptive time-stepping via local error estimation}\label{Subsec:VariableH}
To improve the efficiency of the rational integrator~\eqref{eq:Rational_Integrator}, we develop an adaptive time-stepping strategy based on a local error Proportional Integral (PI) controller~\cite{Gustafsson_1991,PI2}. The structure-preserving properties of the integrator guarantee that, regardless of the time-step variation, the discrete dynamics remain consistent with the qualitative features of the continuous system. This allows us to focus on accuracy and cost-efficiency through step refinement, without compromising the underlying dynamical integrity. To this end, we introduce the auxiliary scheme
\begin{equation}\label{eq:auxiliary_scheme}
    \chi_i^{n+1}=(x_i^n \, \tilde{\varrho}_i^n)\left(\displaystyle\sum_{j=1}^N x_j^n \, \tilde{\varrho}_j^n\right)^{-1}, \qquad \tilde{\varrho}_i^n=\dfrac{(2\varphi_i^n+3)^2+3}{(2\varphi_i^n-3)^2+3}, \qquad i=1,\dots,N,
\end{equation}
for $n=0,\dots,M-1$, with $\varphi_i^n$ defined in \eqref{eq:Rational_Integrator}. The structure-preserving properties of \eqref{eq:auxiliary_scheme} follow from the same arguments developed in Section~\ref{subsec:Structural_Preserviation}. Here, we prove that the ancillary scheme \eqref{eq:auxiliary_scheme} provides a first-order approximation of the solution to \eqref{eq:replicator} and that the quantity $|\chi_i^{n+1}-x_i^{n+1}|$ provides a computable surrogate for the local truncation error $\delta_i(h_n;t_{n+1})$ associated with the normalized rational integrator \eqref{eq:Compact_Rational_Integrator}. 
\begin{theorem}\label{thm:local_Estimate_Controller}
    Assume that the given functions describing problem \eqref{eq:replicator} are continuously differentiable on the probability simplex $\Delta^{N-1}$ defined in \eqref{eq:simplex_def}. Then, the auxiliary method is linearly convergent. Moreover, it asymptotically reproduces the local truncation error of the normalized rational integrator \eqref{eq:Compact_Rational_Integrator}.
\end{theorem}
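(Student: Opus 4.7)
The plan is to begin by unmasking the underlying Padé structure of the auxiliary increment and then to leverage it twice: once to establish first-order accuracy of \eqref{eq:auxiliary_scheme}, and once to quantify its deviation from the rational integrator \eqref{eq:Compact_Rational_Integrator}. A direct expansion of the squared binomials in \eqref{eq:auxiliary_scheme} gives $\tilde{\varrho}(z) = (4z^2+12z+12)/(4z^2-12z+12) = \rho(2z)$, so $\tilde{\varrho}$ is the $[2/2]$ Padé approximant of $\exp(2z)$ and admits the expansion $\tilde{\varrho}(z) = 1+2z+\mathcal{O}(z^2)$. Strict positivity of $\tilde{\varrho}$ and therefore structural preservation of the auxiliary scheme on the probability simplex follow by the same arguments already used for $\rho$ in the proofs of Theorems~\ref{thm:Preservazione_Simplesso}--\ref{thm:Internal_Equilibria}.

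For the consistency claim, the plan is to substitute $z=\varphi_i^n=\tfrac{h_n}{2}F_i(\bm{x}^n)$, with $F_i$ as in \eqref{eq:F_defn}, into the expansion above. Exploiting the simplex identities $\sum_j x_j^n=1$ and $\sum_j x_j^n F_j(\bm{x}^n)=0$, the normalization denominator collapses to $1+\mathcal{O}(h_n^2)$, so the componentwise update reduces to $\chi_i^{n+1} = x_i^n + h_n\, x_i^n F_i(\bm{x}^n)+\mathcal{O}(h_n^2)$. Since $x_i F_i$ coincides with the right-hand side of \eqref{eq:replicator} along the flow, a Taylor comparison with $x_i(t_{n+1})$ yields a local truncation error of order $\mathcal{O}(h_n^2)$, i.e.\ first-order consistency. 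Combined with the already-established positivity and simplex invariance, linear convergence then follows by directly invoking \cite[Theorem~3.6]{Hairer}, mirroring the argument of Theorem~\ref{thm:Convergence}.

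For the second assertion, the plan is to denote by $\tilde{\delta}_i(h_n;t_{n+1})$ the local truncation error of \eqref{eq:auxiliary_scheme}, defined in analogy with \eqref{eq:Local_Error}. Taking a single step of either scheme from the exact value $\bm{x}(t_n)$, the defining identities give $\chi_i^{n+1} - x_i^{n+1} = \delta_i(h_n;t_{n+1}) - \tilde{\delta}_i(h_n;t_{n+1})$, and since Theorem~\ref{thm:Consistency} provides $\delta_i = \mathcal{O}(h_n^3)$ while the previous paragraph gives $\tilde{\delta}_i = \mathcal{O}(h_n^2)$, the difference $|\chi_i^{n+1}-x_i^{n+1}|$ recovers the leading-order behaviour of the local defect up to a higher-order correction, thereby supplying the computable surrogate required by the PI controller \cite{Gustafsson_1991,PI2}. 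The main obstacle I foresee is essentially bookkeeping rather than analytical: tracking the normalization denominator's expansion to exactly the order needed without spurious cancellations, and confirming that the dominant $\mathcal{O}(h_n^2)$ contribution to $\delta_i-\tilde{\delta}_i$ survives the componentwise division, which hinges crucially on the vanishing identity $\sum_j x_j^n F_j(\bm{x}^n)=0$ that eliminates the would-be linear term in the denominator.
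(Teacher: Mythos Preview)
Your proposal is correct and mirrors the paper's argument: the paper implicitly uses your key observation $\tilde{\varrho}(z)=\rho(2z)$ by writing the auxiliary update as $\chi_i(\bm{\xi})=\xi_i\,\rho(h_nF_i(\bm{\xi}))\big/\sum_j\xi_j\,\rho(h_nF_j(\bm{\xi}))$, then expands exactly as you do (via the simplex identities $\sum_j\xi_j=1$ and $\sum_j\xi_jF_j(\bm{\xi})=0$) to obtain first-order consistency and the relation $\chi_i(\bm{x}(t_n))-[\text{rational output}]=\delta_i+\mathcal{O}(h_n^2)$. One minor caveat: your appeal to $\delta_i=\mathcal{O}(h_n^3)$ via Theorem~\ref{thm:Consistency} requires $C^2$ smoothness, whereas the present statement assumes only $C^1$; this extra input is unnecessary, since the identity $\chi_i-x_i=\delta_i-\tilde{\delta}_i$ together with $\tilde{\delta}_i=\mathcal{O}(h_n^2)$ already delivers the surrogate property.
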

\begin{proof}
       Let $\bm{\xi}\in\Delta^{N-1}$ and define $\bm{\chi}(\bm{\xi})=[\chi_1(\bm{\xi}),\dots,\chi_N(\bm{\xi})]^\mathsf{T}$, where $F_i(\bm{\xi})$ is given by \eqref{eq:F_defn} and 
    \begin{equation*}
        \chi_i(\bm{\xi})=\dfrac{\xi_i \ \rho  \left( h_n F_i(\bm{\xi})\right)}{\sum_{j=1}^{N} \xi_j \rho  \left( h_n F_j(\bm{\xi})\right) }, \qquad \qquad i=1,\dots,N.
    \end{equation*}
        Proceeding as in the proof of Theorem \ref{thm:Consistency}, we obtain $\chi_i(\bm{x}(t_n))=x_i(t_n)\cdot (1 + h_n F_i(\bm{x}(t_n))+\mathcal{O}(h_n^2)) \cdot (1+\mathcal{O}(h_n^2)),$ $i=1,\dots,N$ and therefore
    \begin{equation*}
        \bm{\chi}(\bm{x}(t_n))=\bm{x}(t_n) + \frac{h_n}{2} \, \dfrac{d \bm{x}}{d t}(t_n) + \mathcal{O}(h_n^2)=\bm{x}(t_{n+1})+\mathcal{O}(h_n^2),
    \end{equation*}
        which implies that \eqref{eq:auxiliary_scheme} is consistent and convergent of order one. Furthermore, it directly follows from the definition \eqref{eq:Local_Error} that
    \begin{equation*}
        \begin{split}
        &\chi_i(\bm{x}(t_n))-\dfrac{x_i(t_n) \ \rho \! \left( \!  h_n \! \left( \! f_i(\bm{\eta}(\bm{x}(t_n))) - \displaystyle \sum_{j=1}^{N} \eta_j(\bm{x}(t_n)) f_j(\bm{\eta}(\bm{x}(t_n))) \right) \! \right)}{\displaystyle \sum_{j=1}^{N} x_j(t_n)  \rho \! \left( \! h_n \!\! \left( \! f_j(\bm{\eta}(\bm{x}(t_n))) \! - \!\! \displaystyle \sum_{k=1}^{N} \eta_k(\bm{x}(t_n)) f_k(\bm{\eta}(\bm{x}(t_n))) \! \right) \!\! \right) } \\ 
        &= \delta_i(h_n;t_{n+1})+\mathcal{O}(h_n^2), \qquad  \qquad\qquad\qquad\qquad \qquad\qquad \qquad i=1,\dots,N,
        \end{split}
    \end{equation*}
    which completes the proof.
\end{proof}

Having established the approximation and structural properties of the auxiliary scheme~\eqref{eq:auxiliary_scheme}, we now incorporate it into an adaptive time-stepping strategy aimed at controlling the integration accuracy. In the spirit of embedded methods~\cite[Section~II.4]{Hairer}, schemes~\eqref{eq:Rational_Integrator} and~\eqref{eq:auxiliary_scheme} are interpreted as a primary and embedded pair, with the latter furnishing a practical estimate of the local error. To this end, we introduce two vectors of component-wise tolerances, denoted by $\bm{\tau}^{\text{abs}}$ and $\bm{\tau}^{\text{rel}}$, prescribing absolute and relative error bounds. At each time-step $n$, given the tentative approximations $\bm{x}^n$ and $\bm{\xi}^n$ computed by the rational and auxiliary schemes with stepsize $h_n$, the admissibility threshold for the $i$-th component is defined as
\begin{equation*}
    \tau_i = \tau_i^{\text{abs}} + \tau_i^{\text{rel}} \cdot \max\{ x_i^n, \, \xi_i^n \}, \qquad i = 1, \dots, N.
\end{equation*}
The normalized error estimate associated with the current step is then given by
\begin{equation*}
    \varepsilon_n = \sqrt{\frac{1}{N} \sum_{i=1}^N \left( \frac{x_i^n - \xi_i^n}{\tau_i} \right)^2 }.
\end{equation*}
If the condition $\varepsilon_n \leq 1$ is satisfied, the step is accepted; otherwise, it is rejected and recomputed with a smaller stepsize. Independently of acceptance, the stepsize is subsequently updated by means of a PI controller, following the principles outlined in~\cite[Section~IV.2]{Hairer_II}. Specifically, we set
\begin{equation}\label{eq:PI_Controller}
    h_{n+1} = h_n \cdot \min\left\{ \gamma_{\texttt{max}}, \ \max\left\{ \gamma_{\texttt{min}}, \ \gamma \left( \frac{\min_i \tau_i}{\varepsilon_n} \right)^{\alpha - \beta} \! \left( \frac{\varepsilon_{n-1}}{\varepsilon_n} \right)^{\beta} \right\} \right\}, \qquad n\geq 0
\end{equation}
where the controller parameters are chosen as follows
\begin{equation*}
    \alpha = 0.7/r, \quad \beta = 0.4/r, \quad \gamma = (0.25)^{1/r}, \quad r = 2, \quad \gamma_{\texttt{min}} = 0.1, \quad \gamma_{\texttt{max}} = 5.
\end{equation*}
If no prior information is available regarding a suitable initial stepsize $h_0$, the procedure proposed in~\cite[p.140]{Hairer} may be employed. Furthermore, in order to mitigate possible instabilities following step rejections, we additionally impose $\gamma_{\texttt{max}} = 1$ in the iteration immediately succeeding a rejected step.  \\ The adaptive mechanism described above then provides a robust and efficient control of the time-step selection, ensuring that the prescribed accuracy is maintained throughout the integration process. Moreover, since \eqref{eq:auxiliary_scheme} employs the same internal function evaluations as the main integrator \eqref{eq:Rational_Integrator} and requires only elementary operations along with an additional normalization step, it introduces no significant computational efforts.

\section{Numerical experiments}\label{sec:Numerical_Experiments}
This section presents numerical experiments that provide empirical evidence for the theoretical findings established in the previous sections. Representative test cases of the form \eqref{eq:replicator} are here considered and discretized using the normalized rational scheme \eqref{eq:Rational_Integrator}.

\paragraph{Test 1. Convergence order -}\label{test1}\!\!\!\!As a first example we integrate the replicator problem \eqref{eq:replicator} in the case of a simple zero-sum rock-paper-scissor game, for which the fitness function assumes the linear formulation 
\begin{equation}\label{eq:Test 1}
  \bm{f}(\bm{x}(t))=A\bm{x}(t) \qquad \text{with} \qquad A=\begin{bmatrix}
        0 & -1 & 1 \\ 1 & 0 & -1 \\ -1 & 1 & 0
    \end{bmatrix}.
\end{equation}
To assess the experimental convergence behaviour predicted by Theorems~\ref{thm:Convergence} and~\ref{thm:local_Estimate_Controller}, we consider both the normalized rational integrator~\eqref{eq:Rational_Integrator} and the auxiliary scheme~\eqref{eq:auxiliary_scheme}, implemented with a fixed time step $h$. For various choices of $h$, we compute the mean absolute error in the discrete $\ell^2$-norm, denoted by $E_2(h)$ and the corresponding experimental convergence rate $\hat{p}$ as follows
\begin{equation*}
    E_2(h)= \frac{1}{M} \sum_{n=0}^{M} \left\| \bm{y}^{n(ref)} - \bm{y}^n \right\|_2, \qquad \qquad \qquad \hat{p}= \log_2\left( \frac{E_2(h)}{E_2\left(h/2\right)}\right).
\end{equation*}
The reference solution $\bm{y}^{n(ref)}$ is computed via MATLAB’s built-in solver \texttt{ode15s}, with absolute and relative tolerances set to \texttt{AbsTol} $= \varepsilon_{\mathrm{mach}} \approx 2.22 \cdot 10^{-16}$ and \texttt{RelTol} $= 10^{2} \varepsilon_{\mathrm{mach}}$, respectively. As illustrated in Figure~\ref{fig:Order_Test_1} and detailed in Table~\ref{tab:Order_Test_1}, the numerical results confirm the quadratic and linear convergence of schemes~\eqref{eq:Rational_Integrator} and~\eqref{eq:auxiliary_scheme}, respectively.

\begin{figure}[!ht]
    \begin{minipage}{73mm}
    \includegraphics[width=\textwidth]{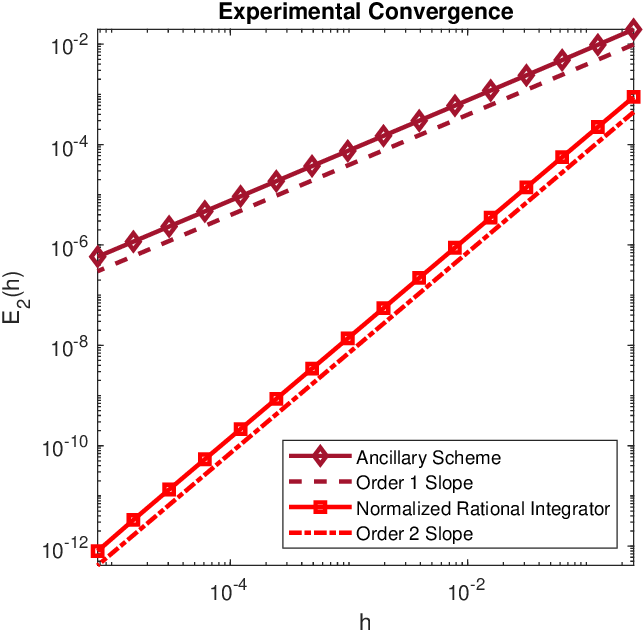}
    \captionsetup{labelformat=empty}
    \caption{}\vspace{-1.5\baselineskip}
    \label{fig:Order_Test_1}
    \end{minipage}
    \marginnote{\textbf{Figure \ref{fig:Order_Test_1}.} \\ Experimental convergence behaviour of the normalized rational integrator~\eqref{eq:Rational_Integrator} and the ancillary scheme~\eqref{eq:auxiliary_scheme} in \hyperref[test1]{Test~1}.}[-5.5\baselineskip]
\end{figure}
\begin{table}[!ht]
    \begin{minipage}{\linewidth}
      \hspace{0.55cm} \begin{tabular}{|c|cc|c|cc|}
            \multicolumn{3}{c|}{\bf Rational Integrator} & \multicolumn{3}{|c}{\bf Ancillary Scheme} \\ \hline
            $h$ & $E_2(h)$ & $\hat{p}$ & $h$ & $E_2(h)$ & $\hat{p}$ \\ \hline
            $2^{-2}$  & $8.88\cdot10^{-4}$  & --    & $2^{-2}$  & $1.97\cdot10^{-2}$ & --    \\ 
            $2^{-3}$  & $2.24\cdot10^{-4}$  & 1.99  & $2^{-3}$  & $9.70\cdot10^{-3}$ & 1.02  \\ 
            $2^{-4}$  & $5.62\cdot10^{-5}$  & 1.99  & $2^{-4}$  & $4.81\cdot10^{-3}$ & 1.01  \\ 
            $2^{-5}$  & $1.41\cdot10^{-5}$  & 2.00  & $2^{-5}$  & $2.40\cdot10^{-3}$ & 1.01  \\ 
            $2^{-6}$  & $3.52\cdot10^{-6}$  & 2.00  & $2^{-6}$  & $1.20\cdot10^{-3}$ & 1.00  \\ 
            $2^{-7}$  & $8.80\cdot10^{-7}$  & 2.00  & $2^{-7}$  & $5.97\cdot10^{-4}$ & 1.00  \\ 
            $2^{-8}$  & $2.20\cdot10^{-7}$  & 2.00  & $2^{-8}$  & $2.98\cdot10^{-4}$ & 1.00  \\ 
            $2^{-9}$  & $5.50\cdot10^{-8}$  & 2.00  & $2^{-9}$  & $1.49\cdot10^{-4}$ & 1.00  \\ 
            $2^{-10}$ & $1.38\cdot10^{-8}$  & 2.00  & $2^{-10}$ & $7.46\cdot10^{-5}$ & 1.00  \\ 
            $2^{-11}$ & $3.44\cdot10^{-9}$  & 2.00  & $2^{-11}$ & $3.73\cdot10^{-5}$ & 1.00  \\ 
            $2^{-12}$ & $8.60\cdot10^{-10}$ & 2.00  & $2^{-12}$ & $1.86\cdot10^{-5}$ & 1.00  \\ 
            $2^{-13}$ & $2.15\cdot10^{-10}$ & 2.00  & $2^{-13}$ & $9.32\cdot10^{-6}$ & 1.00  \\ 
            $2^{-14}$ & $5.36\cdot10^{-11}$ & 2.00  & $2^{-14}$ & $4.66\cdot10^{-6}$ & 1.00  \\ 
            $2^{-15}$ & $1.33\cdot10^{-11}$ & 2.02  & $2^{-15}$ & $2.33\cdot10^{-6}$ & 1.00  \\ 
            $2^{-16}$ & $3.16\cdot10^{-12}$ & 2.07  & $2^{-16}$ & $1.16\cdot10^{-6}$ & 1.00  \\ 
            $2^{-17}$ & $6.70\cdot10^{-13}$ & 2.24  & $2^{-17}$ & $5.82\cdot10^{-7}$ & 1.00  \\ \hline
        \end{tabular}
        \captionsetup{labelformat=empty}
        \caption{}\vspace{-1.5\baselineskip}
        \label{tab:Order_Test_1}
    \end{minipage}
    \marginnote{\textbf{Table \ref{tab:Order_Test_1}.} \\
    Numerical errors and experimental convergence rates for the normalized rational integrator~\eqref{eq:Rational_Integrator} and the ancillary scheme~\eqref{eq:auxiliary_scheme} in \hyperref[test1]{Test~1}.}[-16.5\baselineskip]
\end{table}

For the rock-paper-scissors game governed by the skew-symmetric matrix in \eqref{eq:Test 1}, it is well known that the continuous replicator system~\eqref{eq:replicator} admits periodic trajectories confined to the probability simplex (see, e.g., \cite{RPS_Closed,RPS_Closed_2} and references therein). Thanks to its structure-preserving design, the normalized rational integrator~\eqref{eq:Rational_Integrator} is capable of reproducing this qualitative behaviour, as numerical solutions remain confined within the simplex and closely follow the expected closed orbits even for relatively large time step values. Figure~\ref{fig:Order_Test_Side} shows numerical trajectories computed over the interval $[0,100]$ with fixed stepsize $h = 5 \cdot 10^{-1}$ and multiple initial conditions. Moreover, in agreement with Theorem~\ref{thm:Internal_Equilibria}, the unique interior equilibrium $[1/3,1/3,1/3]^\mathsf{T}$ is exactly preserved and constitutes a fixed point of the scheme~\eqref{eq:Rational_Integrator}.

\begin{figure}[!ht]
    \begin{minipage}{73mm}
        \includegraphics[width=\textwidth]{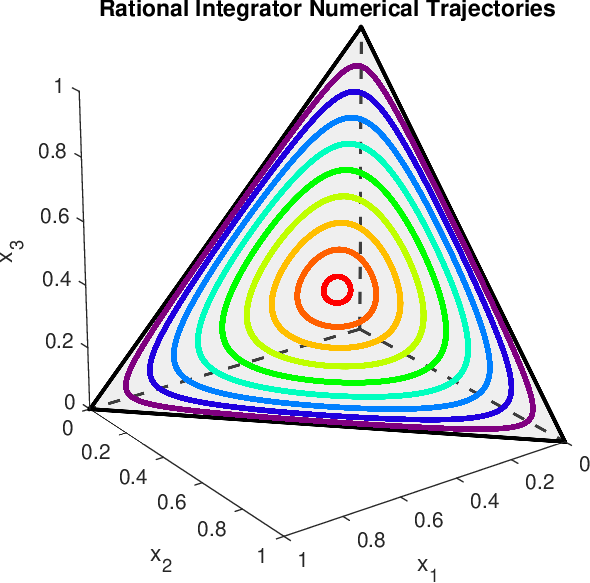}
        \captionsetup{labelformat=empty}
        \caption{}\vspace{-1.5\baselineskip}
        \label{fig:Order_Test_Side}
    \end{minipage}
    \marginnote{\textbf{Figure~\ref{fig:Order_Test_Side}.} \\
    Numerical trajectories for the rock-paper-scissors dynamics in \hyperref[test1]{Test~1}, computed by~\eqref{eq:Rational_Integrator} with $h = 5 \cdot 10^{-1}$ and $T = 10^2$.}[-4.5\baselineskip]
\end{figure}

\paragraph{Test 2. Equilibria preservation and dynamical consistency -}\label{test2}\!\!\!\!We now consider the replicator system \eqref{eq:replicator} with a non-linear fitness function defined by
\begin{equation}\label{eq:Test 2}
    \begin{split}
        &f_1(\bm{x})=\left(10x_3-2\right)^2 \left( \sqrt[10]{\exp(10x_1-1)} + \sin \left(\dfrac{5\pi}{8}x_2\right) + \cosh \left(x_3^2-\dfrac{1}{100}\right) \right), \\
        &f_2(\bm{x})=\log\left(\dfrac{4x_1+1}{3}\right) \left(\dfrac{2+x_1+x_2+x_3}{\log(7)-\log(15)}\right), \\
        &f_3(\bm{x})=3\sin\left(\dfrac{3}{10}-x_2\right)\csc\left(-\dfrac{1}{2}\right) \sqrt{125\, x_1 \, x_2 \, x_3}.
    \end{split}
\end{equation}
It is readily verified that the points $\bm{x}^* = 0.1\cdot[1,\, 8,\, 1]^\mathsf{T}\in\Delta^2$ and $\bm{x}^{**} = 0.1\cdot[5,\, 3,\, 2]^\mathsf{T}\in\Delta^2$ are internal equilibria of the continuous-time dynamics. In what follows, we assess the ability of various time integration methods to preserve the equilibrium $\bm{x}^*$, which is particularly sensitive to numerical perturbations due to the involved structure of the fitness function \eqref{eq:Test 2}. More specifically, we compare the normalized rational integrator~\eqref{eq:Rational_Integrator} with the forward Euler, Heun \cite{heun}, Bogacki–Shampine \cite{bogacki1989}, fourth-order classical Runge–Kutta \cite{Butcher_2016} and Dormand–Prince \cite{dormand1980} methods. All the discretizations are applied with a fixed step size $h=0.2$ over the time interval $[0,35].$ Furthermore, all the simulations are initialized at the equilibrium point so that any observed deviation, measured through the numerical distance
\begin{equation*}
    D^*(t_n) = \sum_{i=1}^3 |x^*_i - x_i^n|, \qquad \qquad n=1,\dots,M,
\end{equation*}
can be unequivocally attributed to the numerical integration method. The results, shown in Figure~\ref{fig:Ar2_Equilibria_Distance}, confirm that the proposed rational scheme exactly preserves the equilibrium $\bm{x}^*$, yielding zero numerical distance throughout the simulation, in agreement with Theorem~\ref{thm:Internal_Equilibria}. In contrast, all other methods exhibit a progressive departure from $\bm{x}^*$, indicating their inability to retain the equilibrium property. This loss is also evident from the qualitative behavior of the trajectories in Figure~\ref{fig:Ar2_Comparison}, where the solutions generated by the standard methods exhibit unphysical deviations from the expected dynamics, including negative components and violation of the simplex constraint.

\begin{figure}[!ht]
    \begin{minipage}{73mm}
        \includegraphics[width=\textwidth]{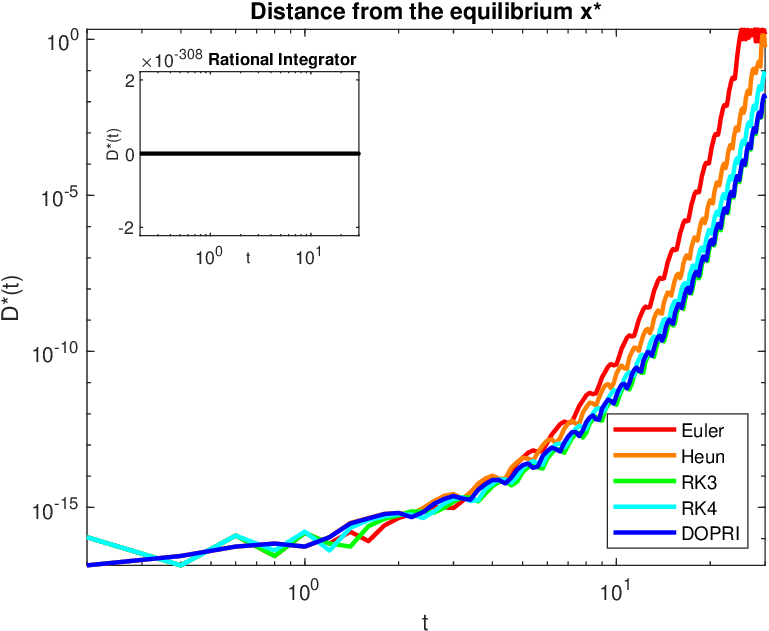}
        \captionsetup{labelformat=empty}
        \caption{}\vspace{-1.5\baselineskip}
        \label{fig:Ar2_Equilibria_Distance}
    \end{minipage}
    \marginnote{\textbf{Figure~\ref{fig:Ar2_Equilibria_Distance}.} \\
   Numerical distance from the equilibrium point computed for \hyperref[test2]{Test~2} by various explicit methods.}[-4.5\baselineskip]
\end{figure}
\begin{figure}[!ht]
    \hspace{0.28cm}\begin{minipage}{\textwidth}
        \includegraphics[width=0.8\textwidth]{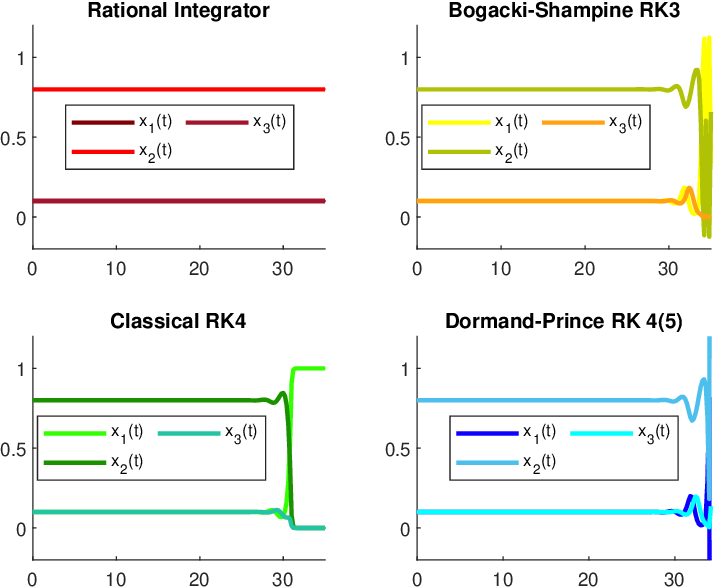}
        \captionsetup{labelformat=empty}
        \caption{}\vspace{-1.5\baselineskip}
        \label{fig:Ar2_Comparison}
    \end{minipage}
    \marginnote{\textbf{Figure~\ref{fig:Ar2_Comparison}.} \\
   Time evolution of the replicator dynamics of \hyperref[test2]{Test~2} under different time discretization schemes starting from the equilibrium $\bm{x}^*\in\Delta^2$}[-16.5\baselineskip]
\end{figure}

\paragraph{Test 3. Discrete quotient rule -}\label{test3}\!\!\!\! To address the challenge of accurately capturing the dynamics of the components’ relative growth, we consider the model \eqref{eq:replicator} with $N=6$ and define
\begin{equation}\label{eq:Test 3}
    f_i(\bm{x}) =  \begin{cases} 
                            4 \sin(6 \pi x_i) +\cos\left(5 \pi x_{1}\right), \qquad \quad &i\in\{1,6\}, \\
                            4 \sin(6 \pi x_i) +\cos\left(5 \pi x_{i+1}\right),  &i=2,\dots,5,
                    \end{cases}
\end{equation}
together with the initial condition 
\begin{equation*}
    \bm{x}^0=\left[\dfrac{2774471}{231910617},\, \dfrac{98983369}{1352447194}, \, \dfrac{97928969}{287827985}, \, \dfrac{12404831}{894275012}, \, \dfrac{14625008}{87442077}, \, \dfrac{38055861}{96713983}\right]^\mathsf{T},
\end{equation*}
deliberately chosen to induce a rich and heterogeneous dynamical behavior over the time interval $[0,5].$  Figure~\ref{fig:Ar3_Simulation} provides insight into the numerical solution obtained by applying the rational integrator \eqref{eq:Rational_Integrator}, with an adaptive stepsize sequence $\{h_n\}$ determined according to the procedure described in Section~\ref{Subsec:VariableH}. The integration is carried out using component-wise absolute and relative tolerances set to $\tau_i^{\text{abs}} = \tau_i^{\text{rel}} = 10^{-10}$ for all $i = 1,\dots,6$ and with an initial steplength $h_0 = 10^{-3}$.

\begin{figure}[!ht]
    \hspace{0.28cm}\begin{minipage}{73mm}
        \includegraphics[width=\textwidth]{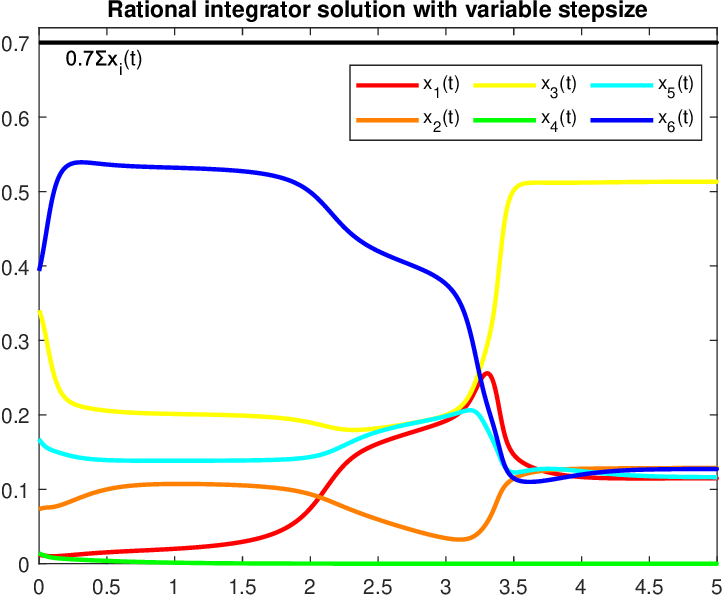}
        \captionsetup{labelformat=empty}
        \caption{}\vspace{-1.5\baselineskip}
        \label{fig:Ar3_Simulation}
    \end{minipage}
    \marginnote{\textbf{Figure~\ref{fig:Ar3_Simulation}.} \\
   Numerical solution computed for \hyperref[test3]{Test~3} by the rational integrator \eqref{eq:Rational_Integrator}. Here, the stepsize is automatically adjusted as described in Section \ref{Subsec:VariableH} }[-4.5\baselineskip]
\end{figure}

Here, we investigate the numerical preservation of the \emph{quotient rule}~\eqref{eq:quotient_rule}. Recalling its discrete counterpart~\eqref{eq:quotient_rule_Discrete}, we define at each time step $n = 0, \ldots, M-1$ the residual matrix $Q^n \in \mathbb{R}^{N \times N}$ as
\begin{equation*}
    Q_{i,j}^n = \frac{x_i^{n+1}}{x_j^{n+1}} - \frac{x_i^n}{x_j^n} \exp\bigl(h_n(f_i(\bm{x}^n) - f_j(\bm{x}^n))\bigr), \qquad \quad i,j=1,\ldots,N,
\end{equation*}
where $\bm{x}^n$ is the approximation of $\bm{x}(t_n)$ computed with stepsize $h_n.$ Furthermore, we introduce the instantaneous residual average
\begin{equation*}
    R^n = \dfrac{1}{N^2}\sum_{i=1}^N \sum_{j=1}^N \left| Q_{i,j}^n \right| ,
\end{equation*}
as a time dependent indicator of the scheme’s ability to reproduce the overall relative growth dynamics of the components. Figure~\ref{fig:Ar3_Residual_time} reports the results obtained using the proposed rational integrator as well as those computed by the Verner Runge--Kutta 8(7) scheme~\cite{Verner_2009}, the Dormand--Prince Runge--Kutta 4(5) method~\cite{dormand1980} and the variable-order backward differentiation formula~\cite{ode15s} implemented in the MATLAB routines \texttt{ode78}, \texttt{ode45} and \texttt{ode15s}, respectively. To ensure a consistent comparison, all the methods are employed with the same sequence of stepsizes, generated by applying the PI controller~\eqref{eq:PI_Controller}, with a tolerance of $10^{-10},$ to the scheme \eqref{eq:Rational_Integrator}. The plots of Figure~\ref{fig:Ar3_Residual_time} clearly show that the rational integrator yields lower values of $R^n$ throughout the simulation, which indicates a markedly improved adherence to the \emph{quotient rule}. This behavior highlights not only the numerical accuracy of \eqref{eq:Rational_Integrator}, but also its enhanced capability to preserve the mutual interaction structure among the components of the system \eqref{eq:replicator}.

\begin{figure}[!ht]
    \begin{minipage}{\textwidth}
        \includegraphics[width=0.8\textwidth]{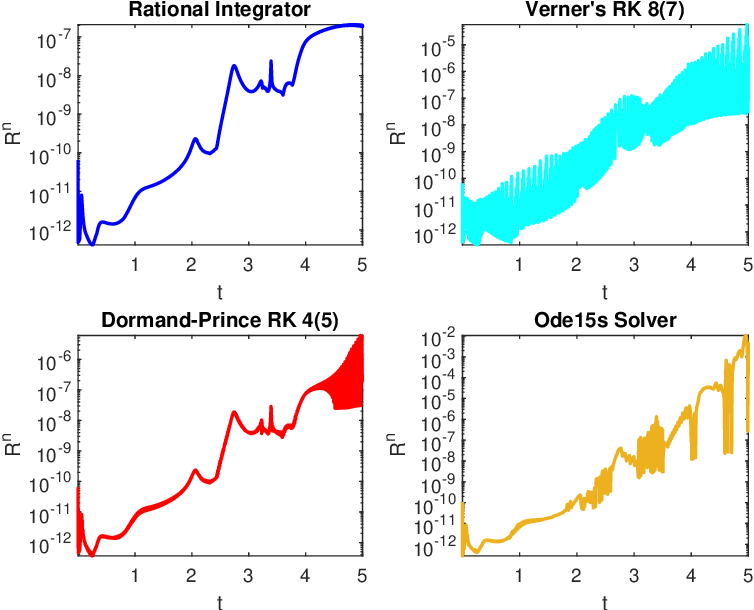}
        \captionsetup{labelformat=empty}
        \caption{}\vspace{-1.5\baselineskip}
        \label{fig:Ar3_Residual_time}
    \end{minipage}
    \marginnote{\textbf{Figure~\ref{fig:Ar3_Residual_time}.} \\
   Instantaneous residual average $R^n$ computed with different variable-stepsize solvers, all using absolute and relative tolerances set to $10^{-10}$. }[-16.5\baselineskip]
\end{figure}

With the aim of numerically validating Theorem~\ref{thm:Discrete_Quotient_Rule}, we consider the fixed-stepsize version of~\eqref{eq:Rational_Integrator} and compute both the global-in-time residual average  $R_Q(h)$ and its experimental decay rate $\varrho$, defined as follows
\begin{equation}\label{eq:residual average}
    R_Q(h) = h\sum_{n=0}^{M} R^n, \qquad \qquad \varrho = \log_2\left( \frac{R(h)}{R\left(h/2\right)}\right).
\end{equation}
The results, reported in Table~\ref{tab:Ar3_R_Order_Test} and Figure~\ref{fig:Ar3_DQR_Convergence}, exhibit a quadratic decay of the residual \eqref{eq:residual average}, in agreement with the second-order accuracy of the rational scheme~\eqref{eq:Rational_Integrator} in approximating the \emph{quotient rule}~\eqref{eq:quotient_rule}.

\begin{figure}[!ht]
    \hspace{0.28cm}\begin{minipage}{73mm}
    \includegraphics[width=\textwidth]{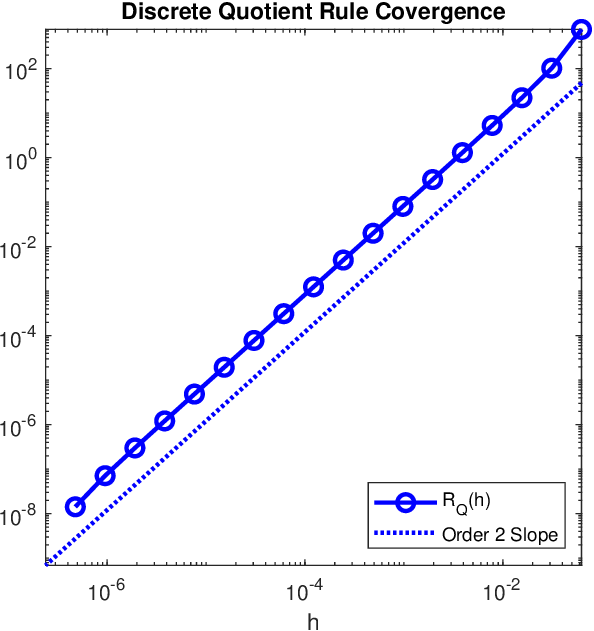}
    \captionsetup{labelformat=empty}
    \caption{}\vspace{-1.5\baselineskip}
    \label{fig:Ar3_DQR_Convergence}
    \end{minipage}
    \marginnote{\textbf{Figure \ref{fig:Ar3_DQR_Convergence}} \\ Experimental convergence of the discrete \emph{quotient rule} residual for the normalized rational integrator~\eqref{eq:Rational_Integrator} applied to \hyperref[test3]{Test~3}.}[-5.5\baselineskip]
\end{figure}
\begin{table}[!ht]
    \begin{minipage}{\linewidth}
      \hspace{0.55cm} 
      \begin{tabular}{|c| c c|c| c c|}
        \multicolumn{6}{c}{\bf Rational Integrator} \\
        \hline
        $h$ & $R_Q(h)$ & $\varrho$ & $h$ & $R_Q(h)$ & $\varrho$ \\
        \hline
        $2^{-4}$  & $7.59 \cdot 10^{2\phantom{-}}$   & -- & $2^{-12}$ & $5.00 \cdot 10^{-3}$ & $2.00$ \\
        $2^{-5}$  & $1.02 \cdot 10^{2\phantom{-}}$   & $2.21$ & $2^{-13}$ & $1.25 \cdot 10^{-3}$ & $2.00$ \\
        $2^{-6}$  & $2.22 \cdot 10^{1\phantom{-}}$   & $2.07$ & $2^{-14}$ & $3.12 \cdot 10^{-4}$ & $2.00$ \\
        $2^{-7}$  & $5.29 \cdot 10^{0\phantom{-}}$   & $2.03$ & $2^{-15}$ & $7.81 \cdot 10^{-5}$ & $2.00$ \\
        $2^{-8}$  & $1.30 \cdot 10^{0\phantom{-}}$   & $2.01$ & $2^{-16}$ & $1.95 \cdot 10^{-5}$ & $2.00$ \\
        $2^{-9}$  & $3.22 \cdot 10^{-1}$  & $2.01$ & $2^{-17}$ & $4.87 \cdot 10^{-6}$ & $2.00$ \\
        $2^{-10}$ & $8.02 \cdot 10^{-2}$  & $2.00$ & $2^{-18}$ & $1.21 \cdot 10^{-6}$ & $2.02$ \\
        $2^{-11}$ & $2.00 \cdot 10^{-2}$  & $2.00$ & $2^{-19}$ & $3.00 \cdot 10^{-7}$ & $2.07$ \\
        \hline
    \end{tabular}
        \captionsetup{labelformat=empty}
        \caption{}\vspace{-1.5\baselineskip}
        \label{tab:Ar3_R_Order_Test}
    \end{minipage}
    \marginnote{\textbf{Table \ref{tab:Ar3_R_Order_Test}.} \\
     Discrete quotient residual and experimental decay rate for the normalized rational scheme \eqref{eq:Rational_Integrator} on \hyperref[test3]{Test~3}.}[-8.5\baselineskip]
\end{table}

\paragraph{Test 4. Adaptive stepsize selection -}\label{test4}\!\!\!\!Our final example addresses the efficient approximation of the solution to \eqref{eq:replicator}, with $\bm{x} \in \Delta^4 \subset \mathbb{R}^5$,
\begin{equation}\label{eq:test5_details}
    \bm{f}^\theta \! (\bm{x}(t)) \!  = \! \theta\begin{bmatrix}
        4 \, (x_2(t) - x_5(t)) + 8 \tanh(3 \, x_4(t) - 9/10) \\
        4 \, (x_3(t) - x_1(t)) + 6 \tanh(2 \, x_5(t) - 2/5) \phantom{.} \\
        4 \, (x_4(t) - x_2(t)) + 7 \tanh(4 \, x_1(t) - 1) \phantom{/5.} \\
        4 \, (x_5(t) - x_3(t)) + 6 \tanh(3 \, x_2(t) - 6/5) \phantom{.} \\
        4 \, (x_1(t) - x_4(t)) + 5 \tanh(5 \, x_3(t) - 7/4) \phantom{.}
    \end{bmatrix},  \quad \bm{x}^0 \!  = \!  \begin{bmatrix}
        0.22 \\ 0.19 \\  0.21 \\ 0.18 \\ 0.20
    \end{bmatrix},
\end{equation}
where $\theta \geq 0$ and $t \in [0,10]$. We begin by integrating problem \eqref{eq:test5_details} with $\theta = 1$, using the rational method \eqref{eq:Rational_Integrator} combined with adaptive stepsize control based on the PI controller \eqref{eq:PI_Controller}. We test different tolerance values, $\tau_i^{\text{abs}} = \tau_i^{\text{rel}} =\tau \in \{10^{-4}, \, 10^{-7}, \, 10^{-10}\}$, and evaluate the absolute simulation errors with respect to a reference solution computed by the Dormand--Prince Runge--Kutta 4(5) scheme~\cite{dormand1980}, as implemented in the MATLAB routine \texttt{ode45} with $\texttt{AbsTol}=\texttt{RelTol}=10^{-13}$. Figure~\ref{fig:Ar4_Errors_Steps} reports, for each tolerance level, the sequence of timesteps selected by the PI controller together with the corresponding $\ell_2$ errors, which remain consistently below the prescribed threshold. The stepsize sequences exhibit a regular and persistent oscillatory pattern, qualitatively similar across all tested values of $\tau$. This response is induced by the structure of the continuous solution to \eqref{eq:replicator}, where some components evolve in a nearly periodic regime. The controller detects and follows these regular oscillations by adjusting the timestep locally, while preserving the overall shape of the sequence.

\begin{figure}[!ht]
    \begin{minipage}{\textwidth}
        \includegraphics[width=0.8\textwidth]{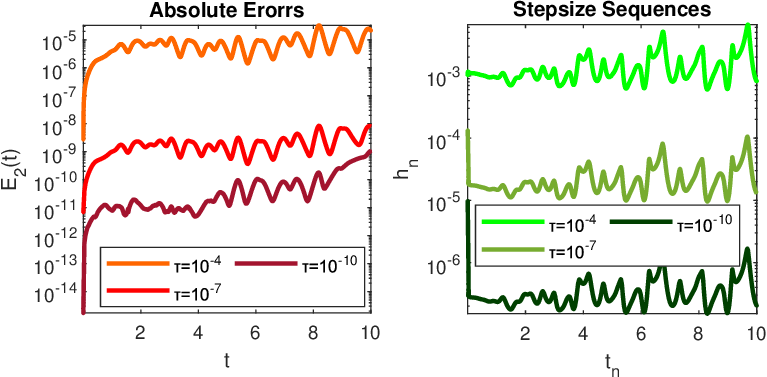}
        \captionsetup{labelformat=empty}
        \caption{}\vspace{-1.5\baselineskip}
        \label{fig:Ar4_Errors_Steps}
    \end{minipage}
    \marginnote{\textbf{Figure~\ref{fig:Ar4_Errors_Steps}.} \\
   Absolute approximation errors and stepsize sequences for the rational integrator \eqref{eq:Rational_Integrator} applied to \hyperref[test4]{Test~4} under different tolerance requirements. Here, $\theta=1.$}[-9\baselineskip]
\end{figure}

The continuous solution structure discussed above makes the efficient integration of the system \eqref{eq:test5_details} particularly challenging, especially for larger values of $\theta$, which induce higher-frequency oscillations. In such cases, even high-order variable-step solvers may fail to capture the underlying dynamics and yield qualitatively incorrect trajectories. As an illustrative example, we reconsider problem \eqref{eq:test5_details} with $\theta = 5$, and compare the numerical solutions produced by the rational integrator with those obtained using MATLAB's \texttt{ode78}, \texttt{ode45} and \texttt{ode15s} solvers, all configured with tolerances set to $10^{-10}$. The simulations obtained with the different methods are reported in Figure~\ref{fig:Ar4_Solutions_Comparison}. As shown therein, the rational integrator, by virtue of its structure-preserving formulation, is the only scheme capable of reliably adapting to the periodic changes of the underlying dynamics. As a matter of fact, the solutions computed with the other solvers progressively deteriorate due to the accumulation of structure-inconsistent effects. In particular, the trajectories produced by \texttt{ode78} and \texttt{ode45} exhibit numerical dissipation, with some components becoming negative or vanishing, eventually leading to a collapse of the total mass $\bm{e}^\mathsf{T} \bm{x}^n$. Conversely, the solution computed by \texttt{ode15s} suffers from spurious numerical artifacts that result in a nonphysical increase in the total mass that compromises the reliability of the simulation.

\begin{figure}[!ht]
    \begin{minipage}{\textwidth}
        \includegraphics[width=0.8\textwidth]{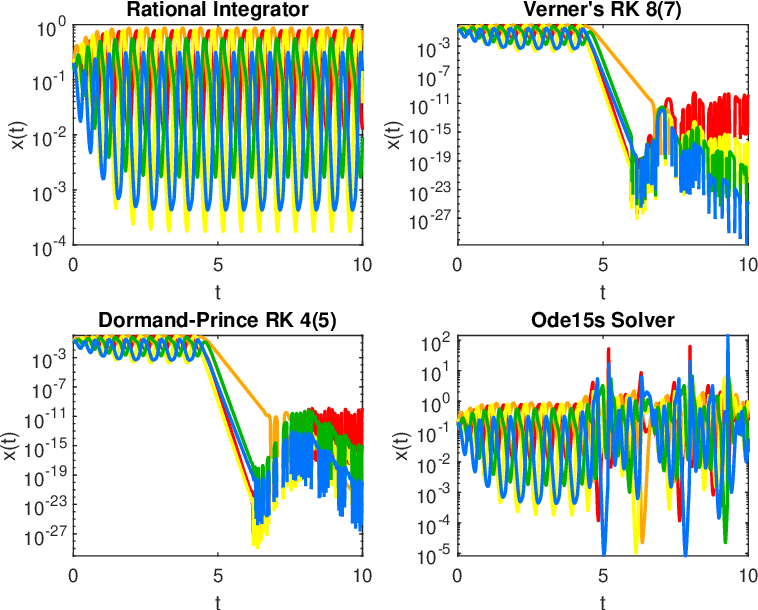}
        \captionsetup{labelformat=empty}
        \caption{}\vspace{-1.5\baselineskip}
        \label{fig:Ar4_Solutions_Comparison}
    \end{minipage}
    \marginnote{\textbf{Figure~\ref{fig:Ar4_Solutions_Comparison}.} \\
   Numerical solutions of \hyperref[test4]{Test~4}, with $\theta=5,$ computed using different variable-stepsize solvers with absolute and relative tolerances set to $10^{-10}$. }[-16.5\baselineskip]
\end{figure}

\section{Conclusions}\label{sec:Conclusions}

In this work, we developed a structure-preserving rational integrator designed for the numerical approximation of the replicator equation on the probability simplex. The method combines a two-stage rational discretization with a normalization step, yielding an explicit scheme that is straightforward to implement and unconditionally preserves fundamental geometric and dynamical properties of the continuous model. In particular, the integrator guarantees non-negativity, mass conservation, invariance of all simplex faces and exact preservation of both boundary and internal equilibria. We proved second-order convergence and constructed an embedded first-order approximation to enable reliable local error estimation and an efficient adaptive time-stepping strategy based on PI control. Additionally, the adherence to the \emph{quotient rule} governing the reciprocal evolution of component ratios is demonstrated. Extensive numerical tests confirm the theoretical results and highlight the method’s superior performance in preserving qualitative features of the replicator dynamics, even in challenging regimes involving periodic and oscillatory solutions. Comparisons with standard variable-step solvers highlight the advantages of the proposed structure-preserving approach.

The present work opens several directions for further investigation. First, the extension of the proposed structure-preserving integrator to replicator equation variants incorporating mutation effects, namely replicator-mutator systems, which are widely studied in evolutionary dynamics \cite{Amadori_Mutations_1, Amadori_Mutations_2}. Another natural extension concerns non-local and integro-differential replicator models, as analyzed in \cite{AMADORI_BRIANI_NATALINI_2016,Alfaro_2023,Alfaro_2020} and related literature. Additional research topics include delayed replicator dynamics, stochastic formulations and heterogeneous multi-population systems, where preserving structural properties remains a key challenge. Finally, exploring the application of rational approximation based integrators to other differential models with inherent structural properties, such as production-destruction \cite{PDS,OFFNER202015,MPSL,GeCO_Izgin} systems, is a promising area for further study. 
\medskip

\paragraph{Data availability statement} The codes implementing the numerical methods discussed in this work are available from the author upon reasonable request. No new data were generated or analyzed in this study and references to existing data are provided in the manuscript.

\paragraph{Acknowledgments} This work has been performed under the Project PE 0000020 CHANGES - CUP\_B53C22003890006, NRP Mission 4 Component 2 Investment 1.3, Funded by the European Union - NextGenerationEU and in the auspices of the Italian National Group for Scientific Computing (GNCS) of the National Institute for Advanced Mathematics (INdAM).


\bibliography{References}

\bibliographystyle{plainurl}

\end{document}